\begin{document}

\title{ Results on the existence of the Yamabe minimizer of $M^m\times \mathbf{R}^n$}
\author{Juan Miguel Ruiz \footnote{The author is supported by CONACYT}}
\date{}
\maketitle
\theoremstyle{definition} \newtheorem{lemma}{Lemma}
\theoremstyle{definition} \newtheorem{thm}[lemma]{Theorem}
\theoremstyle{definition} \newtheorem{prop}[lemma]{Proposition}
\theoremstyle{definition} \newtheorem{rem}[lemma]{Remark}
\theoremstyle{definition} \newtheorem{cor}[lemma]{Corollary}

\begin{abstract}
We let $(M^m, g)$ be a closed smooth Riemannian manifold ($m\geq2$) with positive scalar curvature $S_{g}$, and prove that the Yamabe constant of $(M \times \mathbf{R}^n,g+g_E)$ is achieved by a metric in the conformal class of $(g+g_E)$, where $g_E$ is the Euclidean metric. We also show that the Yamabe quotient of $(M \times \mathbf{R}^n,g+g_E)$ is improved by Steiner symmetrization with respect to $M$. It follows from this last assertion that the dependence on $\mathbf{R}^n$ of the Yamabe minimizer of $(M \times \mathbf{R}^n,g+g_E)$ is radial.
\end{abstract}

\textbf{Keywords.} Scalar curvature, non-compact Yamabe problem.
\textbf{Mathematics subject classification (2000).} 53Cxx: 53C21.



\section{ Introduction}

Let $(M^m,g_M)$ be a smooth closed Riemannian manifold (smooth compact manifold without boundary). Let $[g_M]$ denote the conformal class of the metric $g_M$.  The Yamabe constant of the conformal class $(M^m,[g_M])$ is defined as the infimum of the normalized total scalar curvature restricted to $[g_M]$,
\begin{equation}
\label{Yamabe}
Y(M,[g_M])= \inf_{h \in [g_M]} \frac{\int s_{h} dV_{h}}{Vol(M,h)^{\frac{m-2}{m}}},
\end{equation}
\noindent where  $S_{h}$ and  $dV_{h}$ are the scalar curvature and the volume element  of  $h$. By writing $h=f^{\frac{4}{m-2}} g_M$ ($f$ positive and $C^{\infty}$), we can rewrite (\ref{Yamabe}) in terms of functions of the sobolev space $L_1^2(M)$,

\[Y(M,[g_M])=\inf_{f \in L_1^2(M), f\neq0} Q_{g_M}(f)\]
\begin{equation}
\label{Yamabe2}
:= \inf_{f \in L_1^2(M), f\neq0 } \frac{a \int_M |\nabla f|^2 dV_{g_M}+ \int_M s_{g_M} f^2 dV_{g_M}}{\int_M f^p dV_{g_M})^{\frac{2}{p}}},
\end{equation}

\noindent where $a=\frac{4(m-1)}{(m-2)}$, $p=\frac{2m}{m-2}$. $Q_g(f)$ is called the Yamabe quotient. It is a fundamental result, proven in many steps by H. Yamabe \cite{Yam}, N. Trudinger \cite{Tru}, T. Aubin \cite{Aubin} and R. Schoen \cite{Schoen}, that, for closed manifolds, in each conformal class the infimum is achieved. The metric in each conformal class that achieves the infimum in (\ref{Yamabe}) is called a Yamabe metric and has constant scalar curvature. Meanwhile, the function that achieves the infimum in (\ref{Yamabe2}) is called a Yamabe minimizer. For any conformal class, the Yamabe constant is bounded from above, $Y(M^m,[g_M])\leq Y(S^m,[g_0])=m(m-1)Vol(S^m)$, where $g_0$ is the round metric on $S^m$ with constant sectional curvature 1. The Yamabe invariant  $Y(M)$ of $M$ is defined as the supremum of the Yamabe constants over all the conformal classes (cf. in \cite{Koba}, \cite{Schoen2}). Hence, it is an easy consequence that $Y(M) \leq Y(S^m)=Y(S^m,[g_0])$. In the following, we will denote $Y(S^m)=Y(S^m,[g_0])$ by $Y_m$.

When the Yamabe constant is non-positive, there is only one metric with constant scalar curvature in the conformal class. On the other hand, when the Yamabe constant is positive, there may be several metrics of constant scalar curvature. Examples include $(S^k\times M^m, g_0+g_M)$, where $M^m$ is a Riemannian manifold of constant scalar curvature $s_{g_M}$: it has been shown in \cite{Pet2} that in this case, the number of unit volume non-isometric metrics of constant scalar curvature in the conformal classs of $[g_0+g_M]$ grows at least linearly with $\sqrt{s_{g_M}}$. Possibly the simplest example of several metrics of constant scalar curvature, is the one exhibited in \cite{Aku}: if $(M_1^m,g_1)$ and $(M_2^n,g_2)$ are  Riemannian manifolds with constant scalar curvature, and $s_{g_1}>0$, then $\delta^n g_1+\delta^{-m} g_2$ has volume one and constant scalar curvature greater than $Y_{m+n}$.

Through the study of these cases, Akutagawa, Florit and Petean, found that if $(M_1^m,g_1)$ is a closed manifold ($m\geq2$) of positive scalar curvature and $(M_2^n,g_2)$ any closed manifold, then 
\begin{equation}
\label{akutagawa}
\lim_{r\rightarrow \infty} Y(M\times N,[g_1+r g_2])=Y(M\times \mathbf{R}^n,[g+g_E])
\end{equation}
\noindent where $g_E$ is the Euclidean metric on $\mathbf{R}^n$ (Theorem 1.1 in \cite{Aku}). Making thus the Yamabe constant $Y(M\times \mathbf{R}^n,[g+g_E])$ of high relevance in the study of the Yamabe constant of product manifolds, since, for instance, from (\ref{akutagawa}) follows that the Yamabe invariant of $M\times N$ is bounded below,

\[Y(M\times \mathbf{R}^n,[g_1+g_E])\leq Y(M\times N).\]

 As another example, the Yamabe constant of $Y(S^m\times \mathbf{R}^n,[g+g_E])$ is involved in a surgery formula for the  Yamabe invariant of a compact manifold, as have shown recent results of B. Ammann, M. Dahl and E. Humbert \cite{Amm}.

Also, it was through the case where $n=1$ that J. Petean found a lower bound to the Yamabe invariant of $M_1^m\times S^1$ (when $M_1^m$ an Einstein manifold), among other interesting results involving  $Y(M\times \mathbf{R}^n,[g+g_E])$, \cite{Pet1}.  

In this article we study the Yamabe constant $Y(M^m\times \mathbf{R}^n,[g+g_E])$, where $M^m$ ($m\geq2$), as in (\ref{akutagawa}), is a closed manifold with positive scalar curvature. 

The Yamabe problem for non-compact manifolds has not been solved completely yet. Different counter-examples and conditions for existence and nonexistence of a constant scalar curvature in the conformal class of a metric,  have been published for non-compact manifolds (cf. in \cite{Zhang2}). Results include, e.g., those of K. Akutagawa and B. Botvinnik in \cite{Bot}, where they study  complete manifolds with cylindrical ends and  solve affirmatively the Yamabe problem on cylindrical manifolds. Results include also some cases for noncompact complete manifolds of positive scalar curvature. We cite here the work of S. Kim in \cite{Kim}, where he introduces the notation

\[\mathbf{Q(M)}:= \inf_{u\in C_0^{\infty}(M)} \frac{\int_M |\nabla f|^2 dV_{g_M}+ 1/a \int_M s_{g_M} f^2 dV_{g_M}}{\int_M f^p dV_{g_M})^{\frac{2}{p}}},\]
\noindent and
\[\mathbf{\bar Q(M)}:=\inf_{u\in C_0^{\infty}(M\setminus B_r)} \frac{\int_M |\nabla f|^2 dV_{g_M}+ 1/a \int_M s_{g_M} f^2 dV_{g_M}}{\int_M f^p dV_{g_M})^{\frac{2}{p}}}\]

\noindent (where $r$ is the distance from $x$ to a fixed point $x_0\in M$, and $B_r$ the ball of radius $r$ and centered at $x_0$), and then proves the existence of a constant scalar curvature in the conformal class of $(M,g_M)$ whenever $Q(M)<\bar{Q}(M)$. 

In our case, given some of the particularities of $(M^m\times \mathbf{R}^n,g_M+g_E)$, we use a more direct approach to prove existence of a Yamabe minimizer. We first show that the Steiner symmetrization of a function ``improve'' the Yamabe quotient, making thus, the Steiner symmetrized functions, the best candidates for the Yamabe minimizer. Then, along with this result, we use the fact that $Y((M^m\times \mathbf{R}^n,g_M+g_E)<Y_{m+n}$ (a known result of Akutagawa, Florit and Petean (\cite{Aku})) to prove that the Yamabe minimizer exists and is positive and $C^{\infty}$.

 The fact that that Steiner symmetrizations ``improve'' the Yamabe quotient is a consequence of the following.
 
\begin{thm} \label{8.2}
Let $(N,g)=(M^m\times \mathbf{R}^n,g_M+g_E)$, and $u \in L_{1+}^{s}(N)$, $(1<s<\infty)$. Let $u^*$ be the Steiner symmetrization of $u$, with respect to $M$. Then $u^* \in L_{1+}^{s}(N)$, and
\end{thm}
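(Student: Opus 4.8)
The plan is to split the statement into two parts: equimeasurability in the $\mathbf{R}^{n}$-slices, which disposes of the $L^{s}$-norm and all zeroth-order terms for free, and a P\'olya--Szeg\H{o}-type inequality for the gradient. Recall that the Steiner symmetrization here is the fibered Schwarz symmetrization: for a.e.\ $x\in M$ one replaces the slice $u(x,\cdot)\colon\mathbf{R}^{n}\to\mathbf{R}_{\ge0}$ by its spherically symmetric decreasing rearrangement $u(x,\cdot)^{\#}$ and sets $u^{*}(x,y)=u(x,\cdot)^{\#}(y)$. For each $x$ the functions $u(x,\cdot)$ and $u^{*}(x,\cdot)$ are equimeasurable on $\mathbf{R}^{n}$, so Cavalieri's principle and Fubini give $\int_{N}\Phi(u^{*})\,dV_{g}=\int_{N}\Phi(u)\,dV_{g}$ for every Borel $\Phi$ with $\Phi(0)=0$; in particular $\|u^{*}\|_{L^{s}(N)}=\|u\|_{L^{s}(N)}$, and, since $s_{g}(x,y)=s_{g_{M}}(x)$ is constant along the $\mathbf{R}^{n}$-fibres, the quantities $\int_{N}u^{p}\,dV_{g}$ and $\int_{N}s_{g}\,u^{2}\,dV_{g}$ are also unchanged. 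Thus once the gradient inequality $\int_{N}|\nabla u^{*}|^{s}\,dV_{g}\le\int_{N}|\nabla u|^{s}\,dV_{g}$ is established, the assertion that Steiner symmetrization does not increase the Yamabe quotient follows at once.

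To prove the gradient inequality I would first reduce to $u\in C_{c}^{\infty}(N)$ with $u\ge0$. Since $N=M\times\mathbf{R}^{n}$ is complete with bounded geometry, $C_{c}^{\infty}(N)$ is dense in $L_{1}^{s}(N)$, and standard truncation and mollification let one take the approximants non-negative. The map $u\mapsto u^{*}$ is a contraction on $L^{s}(N)$ --- this follows slice-wise from the $L^{s}(\mathbf{R}^{n})$-nonexpansivity of Schwarz symmetrization together with Fubini --- so if $u_{j}\to u$ in $L_{1}^{s}(N)$ with $u_{j}\in C_{c}^{\infty}$, then $u_{j}^{*}\to u^{*}$ in $L^{s}(N)$. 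Granting $\|\nabla u_{j}^{*}\|_{L^{s}(N)}\le\|\nabla u_{j}\|_{L^{s}(N)}$ for the smooth approximants, the sequence $\{\nabla u_{j}^{*}\}$ is bounded in $L^{s}(N)$; as $1<s<\infty$, $L^{s}$ is reflexive, so along a subsequence $\nabla u_{j}^{*}\rightharpoonup w$ in $L^{s}(N)$, and $u_{j}^{*}\to u^{*}$ identifies $w=\nabla u^{*}$. Hence $u^{*}\in L_{1+}^{s}(N)$ and, by weak lower semicontinuity of the norm, $\|\nabla u^{*}\|_{L^{s}(N)}\le\liminf_{j}\|\nabla u_{j}\|_{L^{s}(N)}=\|\nabla u\|_{L^{s}(N)}$.

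It remains to treat $u\in C_{c}^{\infty}(N)$, $u\ge0$. Here one uses the product splitting $|\nabla u|^{2}=|\nabla_{M}u|_{g_{M}}^{2}+|\partial_{y}u|^{2}$ together with the coarea formula on $N$. Writing $\mu(x,t)=\mathcal{L}^{n}(\{y:u(x,y)>t\})$ and $\rho(x,t)=(\mu(x,t)/\omega_{n})^{1/n}$, the superlevel set $\{u^{*}>t\}$ is the solid tube $\{(x,y):|y|<\rho(x,t)\}$ and $\{u^{*}=t\}$ fibres over $M$ with fibre $\partial B(0,\rho(x,t))$. The isoperimetric inequality in $\mathbf{R}^{n}$, applied in each regular slice, gives $\mathcal{H}^{n-1}(\{y:u(x,y)=t\})\ge n\omega_{n}\rho(x,t)^{n-1}$, while the implicit function theorem and Federer's coarea formula in the $M$-variable give, for a.e.\ $t$, weak differentiability of $\mu(\cdot,t)$ on $M$ with $|\nabla_{M}\mu(x,t)|\le\int_{\{u(x,\cdot)=t\}}|\nabla_{M}u|/|\partial_{y}u|\,d\mathcal{H}^{n-1}$ and $\partial_{t}\mu(x,t)=-\int_{\{u(x,\cdot)=t\}}|\partial_{y}u|^{-1}\,d\mathcal{H}^{n-1}$. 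Translating these into pointwise data for $u^{*}$ along its level sets and feeding them through Minkowski's and Jensen's inequalities reduces, for a.e.\ $t$, to the comparison $\int_{\{u^{*}=t\}}|\nabla u^{*}|^{s-1}\,d\mathcal{H}^{m+n-1}\le\int_{\{u=t\}}|\nabla u|^{s-1}\,d\mathcal{H}^{m+n-1}$, and integrating in $t$ yields the claim; the curvature of $M$ enters only as a fixed smooth background weight, since the symmetrization and the isoperimetric inequality are performed entirely in the flat $\mathbf{R}^{n}$-fibres. The main obstacle is exactly this last comparison: the $M$-derivatives of $u^{*}$ depend on the whole profile $u(x,\cdot)$, not on $u$ pointwise, which is the genuine content of the P\'olya--Szeg\H{o} inequality for Steiner symmetrization. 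I would handle it by the coarea-plus-convexity argument just sketched (as in the classical treatments of Steiner symmetrization), or, alternatively, by realizing $u^{*}$ as an $L^{s}$-limit of iterated one-dimensional Steiner rearrangements in the $\mathbf{R}^{n}$-coordinates, for which the transverse-derivative estimate is more elementary, and then passing to the limit by lower semicontinuity as in the first reduction.
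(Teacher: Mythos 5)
Your proposal is mathematically sound in outline, but it takes a genuinely different route from the paper. Where you prove the P\'olya--Szeg\H{o}-type inequality directly via the coarea formula, the isoperimetric inequality in the flat $\mathbf{R}^{n}$-fibres, and a Jensen/Minkowski convexity argument on level sets, the paper instead works with \emph{polarizations} (two-point rearrangements across hyperplanes $M\times\Sigma$, following Brock--Solynin and Van Schaftingen). The paper's Lemma~\ref{gradlemma} shows that a single polarization $u\mapsto u^{H}$ is a rigid rearrangement of both $u$ and $\nabla u$, so $\|\nabla u^{H}\|_{s}=\|\nabla u\|_{s}$ holds as an \emph{equality}, elementarily, with no isoperimetry and no level-set analysis; all the delicacy is then shifted into Lemmas~\ref{closer}--\ref{convergence}, which show that a suitably chosen sequence of iterated polarizations of $u$ converges to $u^{*}$ in $L^{s}(N)$, after which weak lower semicontinuity of $\|\nabla\cdot\|_{s}$ finishes the proof exactly as in your last step. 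The trade-off is this: your coarea approach is shorter to state but must confront the well-known technical obstacles in making the level-set comparison rigorous --- $u^{*}$ is in general only Lipschitz even when $u\in C_{c}^{\infty}$, Sard-type regularity is needed for both $u$ and $u^{*}$, and the inequality $\int_{\{u^{*}=t\}}|\nabla u^{*}|^{s-1}\le\int_{\{u=t\}}|\nabla u|^{s-1}$ does not come cheaply from slice-wise Schwarz symmetrization because of exactly the coupling you identify (the $M$-derivative of $u^{*}$ sees the whole fibre profile). Your phrase ``as in the classical treatments'' is carrying a lot of weight there. The polarization route sidesteps all of this at the price of the approximation machinery. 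Your suggested alternative via iterated one-dimensional Steiner rearrangements is closer in spirit to the paper's scheme (approximate the symmetrization, then pass to the limit), but it still requires a P\'olya--Szeg\H{o} \emph{inequality} at the building-block level, whereas polarizations give an \emph{identity} there, which is the structural reason the paper prefers them. Both your density reduction to $C_{c}^{\infty}$ and your weak-compactness/lower-semicontinuity limiting argument are correct and essentially the same as the paper's final step; what differs is the core mechanism generating the gradient inequality for nice functions.
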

\begin{equation}
\label{polya} 
||\nabla u^*||_s\leq ||\nabla u||_s.
\end{equation}

Indeed, using inequality (\ref{polya}) from the preceding theorem, and the fact that the norm is preserved under Steiner symetrizations ($||u^*||_s =||u||_s$, for any $s$), the next corollary follows.

\begin{cor}
\label{radial}
Consider $(N,g)=(M^m\times \mathbf{R}^n,g_M+g_E)$, and the Yamabe quotient for $2\leq s\leq p$:
\[Q_s(u)=\frac{a\int_N |\nabla u|^2 dV_g+\int_N s_g u^2 dV_g}{(\int_N u^s dV_g)^{\frac{2}{s}}},\]

\noindent then $Q_s(u^*) \leq Q_s(u)$.
\end{cor}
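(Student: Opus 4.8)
The plan is to compare the three ingredients of the Yamabe quotient $Q_s$ one at a time under Steiner symmetrization $u \mapsto u^*$, using that $u^*$ depends on the $\mathbf{R}^n$-variable only through $|y|$ and is a nonincreasing rearrangement in that direction. First, for the denominator: since Steiner symmetrization with respect to $M$ rearranges, for each fixed $x \in M$, the slice function $y \mapsto u(x,y)$ into its radial decreasing rearrangement on $\mathbf{R}^n$, the distribution function of $u$ on each slice is preserved, and hence by Fubini (the product metric gives $dV_g = dV_{g_M}\, dy$) we get $\|u^*\|_s = \|u\|_s$ for every $s$; this is exactly the equimeasurability already quoted in the paragraph preceding the corollary. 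The same equimeasurability applied with $s=2$ handles the zero-order term in the numerator: because $s_g = s_{g_M}$ depends only on $x$, we have
\begin{equation}
\int_N s_g\, (u^*)^2\, dV_g = \int_M s_{g_M}\left(\int_{\mathbf{R}^n} (u^*)^2\, dy\right) dV_{g_M} = \int_M s_{g_M}\left(\int_{\mathbf{R}^n} u^2\, dy\right) dV_{g_M} = \int_N s_g\, u^2\, dV_g,
\end{equation}
with no inequality lost.

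The one term that genuinely changes is the Dirichlet energy, and here I would invoke Theorem \ref{8.2} directly with $s=2$: it gives $\|\nabla u^*\|_2 \le \|\nabla u\|_2$, i.e. $\int_N |\nabla u^*|^2\, dV_g \le \int_N |\nabla u|^2\, dV_g$. Combining the three facts — denominator unchanged, potential term unchanged, gradient term not increased, and $a > 0$ — yields
\begin{equation}
Q_s(u^*) = \frac{a\int_N |\nabla u^*|^2\, dV_g + \int_N s_g (u^*)^2\, dV_g}{\left(\int_N (u^*)^s\, dV_g\right)^{2/s}} \le \frac{a\int_N |\nabla u|^2\, dV_g + \int_N s_g u^2\, dV_g}{\left(\int_N u^s\, dV_g\right)^{2/s}} = Q_s(u),
\end{equation}
which is the assertion. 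One should note that applying Theorem \ref{8.2} requires $u \in L^2_{1+}(N)$, which is the natural domain of $Q_s$ and on which all the integrals above are finite for $2 \le s \le p$ by the Sobolev embedding; the theorem then also certifies $u^* \in L^2_{1+}(N)$, so $Q_s(u^*)$ is well defined.

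I do not expect a serious obstacle here: the corollary is essentially a repackaging of Theorem \ref{8.2} together with equimeasurability. The only point deserving care is the invariance of the potential term, which works \emph{because} the symmetrization is taken with respect to $M$ and $s_g$ is constant along the $\mathbf{R}^n$-fibers — had one symmetrized in a way that mixed the $M$-variable, this step would fail. If one wants to be slightly more careful about the hypothesis $2 \le s \le p$, one only uses it to guarantee finiteness of $\int_N u^s\,dV_g$ (hence a nonzero, finite denominator) for functions in $L^2_{1+}(N)$; the inequality itself holds formally for any $s$ for which both sides are defined.
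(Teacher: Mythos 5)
Your proposal is correct and follows the route the paper intends: the paper states the corollary as an immediate consequence of Theorem~\ref{8.2} (applied with $s=2$ to the gradient term) together with equimeasurability $\|u^*\|_s=\|u\|_s$. You are in fact slightly more careful than the paper's one-line justification, which mentions only preservation of the $L^s$ norm and leaves the term $\int_N s_g u^2\,dV_g$ implicit; your Fubini observation that $s_g=s_{g_M}$ is constant along the $\mathbf{R}^n$-fibers over which the symmetrization acts is exactly what is needed to see that this term, too, is unchanged.
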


The main result of this paper, the existence of the Yamabe minimizer of $(N,g)=(M^m\times \mathbf{R}^n,g_M+g_E)$, is stated in the next Theorem.

\begin{thm} 
\label{principal}
Let $(N,g)=(M^m\times \mathbf{R}^n,g_M+g_E)$, with $m\geq2$ and $s_{g_M}>0$. The Yamabe minimizer of $(N,g)$ exists, and is positive and $C^{\infty}$.
\end{thm}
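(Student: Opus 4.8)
The plan is to run the direct method of the calculus of variations on the critical functional
\[
Q_p(u)=\frac{a\int_N|\nabla u|^2\,dV_g+\int_N s_{g_M}\,u^2\,dV_g}{\left(\int_N u^p\,dV_g\right)^{2/p}},\qquad p=\frac{2(m+n)}{m+n-2},
\]
whose infimum is $Y_p:=Y(N,[g])$, using Corollary \ref{radial} to recover the compactness that the factor $\mathbf{R}^n$ otherwise destroys. First I would record two facts. (i) The scalar curvature of the product metric $g=g_M+g_E$ is $s_{g_M}$, which on $N$ is bounded below by $c_0:=\min_M s_{g_M}>0$ because $M$ is compact; hence $a\int_N|\nabla u|^2+\int_N s_{g_M}u^2\ge\min(a,c_0)\,\|u\|_{L_1^2(N)}^2$, so $Q_p$ is coercive on $L_1^2(N)$ and $Y_p>0$. (ii) Replacing $u$ by $|u|$ and then by its Steiner symmetrization $|u|^{*}$ with respect to $M$ does not increase $Q_p$, by Corollary \ref{radial}. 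So there is a minimizing sequence $u_i=u_i^{*}\ge 0$ with $\|u_i\|_p=1$, $Q_p(u_i)\to Y_p$, and each $u_i$ radially nonincreasing in the $\mathbf{R}^n$ variable.

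Next I would pass to the limit. By coercivity $\{u_i\}$ is bounded in $L_1^2(N)$, so along a subsequence $u_i\rightharpoonup u$ in $L_1^2(N)$ with $u_i\to u$ in $L^2_{loc}$ and a.e.; then $u\ge 0$ and $u=u^{*}$. The crux is that $\|u\|_p=1$, i.e.\ no mass is lost, and there are only two ways mass could escape. Escape to $|y|\to\infty$ is ruled out by the radial structure: a nonincreasing radial function $v$ on $\mathbf{R}^n$ satisfies $v(\rho)^2\,|B_\rho|\le\int_{\mathbf{R}^n}v^2$, and combining this with the bound $\int_M\|u_i(x,\cdot)\|_{L^2(\mathbf{R}^n)}^p\,dV_M(x)\le C$ (which follows from $\|u_i\|_{L_1^2(N)}\le C$ together with the subcritical Sobolev embedding $L_1^2(M)\hookrightarrow L^p(M)$, valid since $p<2m/(m-2)$ and $M$ is compact) gives $\int_{\{|y|\ge R\}}u_i^p\,dV_g\to 0$ as $R\to\infty$, uniformly in $i$. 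Concentration of mass at a point $q\in N$ is ruled out by the Aubin-type strict inequality $Y_p=Y(M\times\mathbf{R}^n,[g])<Y_{m+n}$ of Akutagawa--Florit--Petean \cite{Aku}: a concentrating subsequence, rescaled around $q$ — where the rescaled metrics converge in $C^2_{loc}$ to the Euclidean metric on $\mathbf{R}^{m+n}$ — would produce a nontrivial solution of the Yamabe equation on $(\mathbf{R}^{m+n},g_E)$ of energy at most $Y_p$, forcing $Y_p\ge Y_{m+n}$, a contradiction. (Alternatively one can first minimize the subcritical $Q_s$, $2\le s<p$, where the same symmetrized argument yields a minimizer $u_s>0$ with no concentration to exclude, and then let $s\to p$ using $Y_s\to Y_p<Y_{m+n}$; the difficulty sits in the same place.) With both mechanisms excluded, $u_i\to u$ strongly in $L^p(N)$, so $\|u\|_p=1$, and weak lower semicontinuity of $\int_N|\nabla u|^2$ and $\int_N s_{g_M}u^2$ gives $Q_p(u)\le\liminf Q_p(u_i)=Y_p$; hence $u$ is a Yamabe minimizer.

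Finally I would upgrade $u$ to a positive smooth function. The minimizer $u\ge 0$ with $\|u\|_p=1$ satisfies the Euler--Lagrange equation $-a\,\Delta_g u+s_{g_M}\,u=Y_p\,u^{p-1}$ weakly on $N$. Locally this is of subcritical type after the usual potential-splitting device, so the standard regularity argument for the Yamabe equation ($L^q$-bootstrap à la Trudinger / Brezis--Kato) gives $u\in L^q_{loc}$ for every $q<\infty$, hence $u\in C^{1,\alpha}_{loc}$ by $L^q$-theory, hence $u\in C^\infty$ by Schauder iteration. Since $u\ge 0$, $u\not\equiv 0$, and $-a\,\Delta_g u+s_{g_M}u\ge 0$, the strong maximum principle gives $u>0$ everywhere, so $u^{4/(m+n-2)}g\in[g]$ realizes $Y(N,[g])$, which proves Theorem \ref{principal}. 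The step I expect to be the main obstacle is the compactness analysis: making the blow-up/no-concentration argument rigorous — identifying the rescaled limit and comparing with the sharp Euclidean Sobolev constant to invoke $Y_p<Y_{m+n}$ — is delicate, and the exclusion of escape to infinity genuinely relies on having first symmetrized via Corollary \ref{radial}. It is this interplay, not the (routine) elliptic regularity, that is the heart of the proof.
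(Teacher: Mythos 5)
Your proposal takes a genuinely different route from the paper. You attack the critical functional $Q_p$ directly by the calculus of variations: symmetrize a minimizing sequence via Corollary \ref{radial}, then recover $L^p$-tightness by ruling out escape to infinity and point concentration separately. The paper instead follows the subcritical approximation scheme of Lee--Parker/Trudinger, minimizing $Q_s$ for $s<p$ first (where Rellich--Kondrakov on $M\times B_R$ is already compact, so the only loss of mass to fight is translation/escape, which is handled by symmetrization and a Lions-type splitting lemma), and then sending $s\to p$; there the strict inequality $\lambda_p<Y_{m+n}$ of Akutagawa--Florit--Petean enters via a Moser-type estimate with the sharp Sobolev constant (the coefficient $\lambda_s/Y_{m+n}<1$ gets absorbed on the left), giving a uniform $L^r$-bound for some $r>p$, hence $C^{2,\alpha}_{loc}$-bounds and convergence. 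Your treatment of escape to infinity --- using $u_i=u_i^*$ to get $u_i(x,y)^2|B_{|y|}|\le\|u_i(x,\cdot)\|_{L^2(\mathbf{R}^n)}^2$ and then the Sobolev embedding $L_1^2(M)\hookrightarrow L^p(M)$ on the compact $M$ (valid because $p<2m/(m-2)$ for $n\ge1$) to control $\int_M\|u_i(x,\cdot)\|_{L^2(\mathbf{R}^n)}^p\,dV_M$ --- is correct and even cleaner than the paper's covering argument; it is a nice use of the radial structure that Steiner symmetrization provides.

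The genuine gap is the no-concentration step. You propose to rescale a concentrating minimizing sequence about a point $q$ and ``produce a nontrivial solution of the Yamabe equation on $(\mathbf{R}^{m+n},g_E)$.'' But a minimizing sequence for $Q_p$ does not solve any PDE, so rescalings of it have no reason to converge, even weakly, to a solution of anything; the blow-up-to-a-Euclidean-bubble argument you invoke is standard for sequences of \emph{solutions} (or Palais--Smale sequences), not for arbitrary minimizing sequences. What the direct critical approach actually needs is Lions' \emph{second} concentration-compactness lemma --- the measure decomposition $|u_i|^p\,dV\rightharpoonup|u|^p\,dV+\sum_j\nu_j\delta_{x_j}$, $|\nabla u_i|^2\,dV\rightharpoonup d\mu\ge|\nabla u|^2\,dV+\sum_j\mu_j\delta_{x_j}$, with $Y_{m+n}\,\nu_j^{2/p}\le a\,\mu_j$ --- which applies to any bounded sequence in $L_1^2$ and yields the contradiction with $Y_p<Y_{m+n}$ directly, without ever extracting a bubble. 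Also, your parenthetical that in the subcritical alternative ``the difficulty sits in the same place'' undersells the benefit of that route: for fixed $s<p$ there is no point-concentration problem at all (the embedding is subcritical and locally compact), and in the limit $s\to p$ the strict inequality is spent on a uniform $L^r$ and then $L^\infty$ bound rather than on a blow-up analysis, which is precisely how the paper closes the argument.
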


The result we give is sharp, since counter-examples for manifolds of the type $(M^m\times \mathbf{R}^n,g_M+g_E)$, where $M^m$ has non-positive scalar curvature or where $m<2$, and a positive Yamabe metric is not achieved are known to exist. An example of the former is $N^9=S^1\times S^1\times S^1\times S^1\times S^1\times S^1\times \mathbf{R}^3$ (with the metric  being the product of those usual ones on $\mathbf{R}^3$ and $S^1$), while an example of the latter is $N^4=S^1 \times \mathbf{R}^3$ (with the metric being the product of the usual metrics), as is shown by Zhang in \cite{Zhang2} and \cite{Zhang}.

This paper is organized as follows. In section 2 we give the precise definition of the Steiner symmetrization of a function $u$, $u^*$, with respect to $M$; we also give the definition of Polarizations and we introduce other preliminaries. We also give a proof of Theorem \ref{8.2}; many of the proofs and lemmas we give there are due to Brock and Solynin \cite{Brock} and to Jean Van Schaftingen \cite{Van}, with some minor modifications. Finally, in section 4, we give the proof of Theorem \ref{principal}. In this last section we follow the ideas of the classical proof of the Yamabe problem for compact manifolds (cf. in \cite{Lee}), and we take into account the non-compactness of the situation through the techniques of the Compactness Concentration Principle of Lions \cite{Lions1}, \cite{Lions2}.

\noindent \textbf{Acknowledgment.} The author would like to thank his supervisor J. Petean for many useful observations and valuable conversations on the subject.

\section{Proof of Theorem 1}

In this section we state some preliminary definitions and results we will need for the proof of Theorem \ref{8.2}. We begin by stating the definitions in $(N,g)=\left(M^m\times\mathbf{R}^n,g_M+g_E\right)$ of a Steiner symmetrization with respect to $M$, and of a Polarization by a polarizer in $\mathbf{R}^n$. We then prove some properties of Polarizations, such as  the fact that Polarizations preserve the $s$ norm, for any $s\leq 1$, 
\begin{equation}
\label{nablilla}
||\nabla u^H||_p=||\nabla u||_p,
\end{equation}

\noindent (lemma \ref{gradlemma}). At the end of this section we give a proof of Theorem \ref{8.2}, by showing that we can approximate any Steiner symmetrization by constructing a carefully chosen sequence of Polarizations, and then by verifying that a less or equal than relation in (\ref{nablilla}) between the gradient of $u^*$, and the gradient of $u$, is preserved in the limit of the sequence. These results are a more or less direct adaptation to our case of the work of Brock and Solynin \cite{Brock} and of Jean Van Schaftingen \cite{Van}.


\subsection{Steiner symmetrizations}

 Consider $(N,g)=\left(M^m\times\mathbf{R}^n,g_M+g_E\right)$, where $(M^m,g_M)$ is a closed Riemannian manifold and $g_E$ the Euclidean metric.  Through the course of this article we will refer to Steiner symmetrizations in $(N,g)$ with respect to $M$, simply as Steiner symmetrizations. We first define Steiner symmetrizations for sets. Let $U$ be a measurable set in $(N,g)$, we define its Steiner symmetrization $U^*$  as follows.

For each $x_0 \in M$, if 
\[Vol(U \cap (\{x_0\}\times \mathbf{R}^n),g_E) >0,\] then

\begin{equation}
\label{choose}
\left(U^*\cap(\{x_0\}\times \mathbf{R}^n)\right)=\bigg\{ 
\begin{array}{l}
  \{x_0\}\times B_{\rho}(0),\ \ \mathit{if} \ \ U  \ \ \mathit{ is\ \  open},\\
\{x_0\}\times \bar{B}_{\rho}(0), \ \ \mathit{if} \ \ U  \ \ \mathit{ is\ \ compact},
 \end{array}
\end{equation}

\noindent where $B_{\rho}(0)$ is an open ball in $\mathbf{R}^n$, of radius $\rho>0$, centered at the origin, and $\rho$ is such that
\[Vol(U \cap (\{x_0\}\times \mathbf{R}^n),g_E)= Vol(B_{0}(\rho),g_E).\]

\noindent In particular, $\rho$ depends on $x_0$.

On the other hand, if $U$ is measurable but neither open nor compact, then  the sets $U^*\cap(\{x_0\}\times \mathbf{R}^n)$ are defined in almost everywhere sense by either one of (\ref{choose}). Finally, if $Vol(U \cap (\{x_0\}\times \mathbf{R}^n),g_E) = 0$, then $U^*\cap(\{x_0\}\times \mathbf{R}^n)$ is either empty or the point $(x_0,0)$, according to whether $U\cap(\{x_0\}\times \mathbf{R}^n)$ is empty or not.

It is not hard to see that for any sets $A$, $B$ $\subset N$,
\begin{equation}
\label{nonexp}
A\subset B \Rightarrow A^* \subset B^*
\end{equation}

\noindent and that for measurable subsets $A\subset B \subset N$,
\begin{equation}
\label{nonexp2}
Vol_g(B^*\backslash A^*)\leq Vol_g (B\backslash A).
\end{equation}   

We now define Steiner symmetrizations for functions. Consider the measurable functions $u:N \rightarrow \mathbf{R}$ for which 
\[Vol(\{x \in N| u(x)>c\})< +\infty,\]
 \noindent $\forall c>0 \inf u$ (in the following, we will denote $\{x\in N|u(x)>c\}$ by $\{u>c\}$). We will call $Sym$ this class of functions. We note that  $L^s(N)$, $L_{1}^s(N)$ and $C_{0}(N)$ are subspaces of $Sym$. The Steiner symmetrization of a measurable function $u:N \rightarrow \mathbf{R}^+$ in $Sym$ is defined as follows. Let $y \in N $, then

\[u^*(y)= \sup \{c\in \mathbf{R} | y \in \{u>c\}^*\}.\]
\noindent It follows that for any $c\in \mathbf{R}$,
 
\begin{equation}
\label{nonexp1}
\{u>c\}^*=\{u^*>c\}.
\end{equation}

One important property of Steiner symmetrizations is that they are non-expansive.

\begin{lemma}
\label{expa}
Given $1\leq s< \infty$, we have
\begin{equation}
\label{exa}
||u^*-v^*||_s\leq ||u-v||_s
\end{equation}
\end{lemma}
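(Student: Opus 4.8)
The plan is to reduce the non-expansiveness of Steiner symmetrization to the corresponding fact for sets, using the layer-cake (distribution function) representation of the $L^s$ norm. First I would recall that for nonnegative $u$ in $Sym$, the symmetrization $u^*$ has, by \eqref{nonexp1}, the same distribution function as $u$ on each slice $\{x_0\}\times\mathbf{R}^n$ in the sense that $\mathrm{Vol}_{g_E}(\{u(x_0,\cdot)>c\})=\mathrm{Vol}_{g_E}(\{u^*(x_0,\cdot)>c\})$ whenever the former is finite; integrating over $M$ this already gives the equimeasurability $\mathrm{Vol}_g(\{u>c\})=\mathrm{Vol}_g(\{u^*>c\})$, hence $\|u^*\|_s=\|u\|_s$ for all $s$. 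For general (signed) $u$ one reduces to the nonnegative case after noting that symmetrization is defined through superlevel sets and is monotone by \eqref{nonexp}; but since the theorem is stated for the class $Sym$ of functions and the application only needs nonnegative $u$, I would, if necessary, either assume $u,v\ge 0$ or handle the general case by the standard truncation $u=u^+-u^-$ argument.

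The core step is the two-function inequality. I would prove the pointwise set estimate
\[
\{u^*>c\}\,\triangle\,\{v^*>c\}\ \subseteq\ \big(\{u>c'\}\triangle\{v>c'\}\big)^{*}
\]
is \emph{not} quite what is needed; instead the cleanest route is the classical one: for nonnegative $u,v$,
\[
\|u-v\|_s^s = s(s-1)\int_{\mathbf{R}}\int_{\mathbf{R}} \big| \,|\{u>t\}\cap\{v>\tau\}^c| + |\{v>t\}\cap\{u>\tau\}^c|\,\big|\,\cdots
\]
is getting heavy; the more economical approach, and the one I would actually carry out, is to use the characterization
\[
\|u-v\|_s = \sup\Big\{ \int_N (u-v)\,w\,dV_g \ :\ \|w\|_{s'}\le 1\Big\}
\]
together with the fact that it suffices to test against $w=w^*$ (by the Hardy--Littlewood inequality $\int u\,w \le \int u^*\,w^*$, which itself follows from the set version via layer-cake). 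Combining $\int (u-v)w \le \int(u^*-v^*)w^*$ would overshoot, so the honest argument is the Crowe--Zweibel--Rosenau / Brock--Solynin style estimate: show directly that $t\mapsto \mathrm{Vol}_g(\{u>t\}\setminus\{v>t\})$ is controlled after symmetrization using \eqref{nonexp} and \eqref{nonexp2}, namely
\[
\mathrm{Vol}_g\big(\{u^*>t\}\setminus\{v^*>s\}\big)\le \mathrm{Vol}_g\big(\{u>t\}\setminus\{v>s\}\big)
\]
for $t>s$, which is exactly \eqref{nonexp2} applied to $A=\{u>t\}\cap\{v>s\}\subseteq B=\{u>t\}$ on each slice (using that $(\{u>t\}\cap\{v>s\})^* \supseteq \{u^*>t\}\cap\{v^*>s\}$ when $t>s$ by nestedness of symmetrized balls), and then integrate the double-layer-cake formula
\[
\|u-v\|_s^s=s(s-1)\int_{-\infty}^{\infty}\!\!\int_{-\infty}^{\infty}|t-\tau|^{s-2}\,\mathrm{Vol}_g\big(\{u>\max(t,\tau)\}\setminus\{v>\min(t,\tau)\}\ \cup\ \{v>\max\}\setminus\{u>\min\}\big)\,dt\,d\tau
\]
to conclude.

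The steps in order: (1) establish equimeasurability and $\|u^*\|_s=\|u\|_s$ from \eqref{nonexp1}; (2) prove the slicewise containment $\{u^*>t\}\cap\{v^*>\tau\}\subseteq(\{u>t\}\cap\{v>\tau\})^*$ for the relevant ordering of $t,\tau$, using that on each slice the symmetrized sets are balls centered at $0$ and hence nested, so the symmetrization of an intersection contains the intersection of symmetrizations — this is the one genuinely geometric input and relies essentially on symmetrizing with respect to the \emph{same} center in $\mathbf{R}^n$ for every $x_0\in M$; (3) feed this into \eqref{nonexp2} to bound each ``double superlevel'' volume for $u^*,v^*$ by the corresponding one for $u,v$; (4) integrate via the two-variable layer-cake identity for $\|u-v\|_s^s$. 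The main obstacle I anticipate is step (2) together with making step (4) rigorous for general signed $u,v$ in $Sym$ (as opposed to nonnegative, compactly supported, or bounded ones): one must justify the layer-cake identity and the interchange of integrals when $u$ or $v$ may be unbounded or only in $Sym$, which is why the finiteness condition built into the definition of $Sym$ (superlevel sets of finite volume) is exactly what is needed; if that becomes delicate I would first prove the inequality for truncations $u_k=\min(\max(u,-k),k)$ and pass to the limit using Fatou on the left and monotone/dominated convergence on the right.
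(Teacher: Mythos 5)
Your proposal lands on the same route the paper takes (and that Brock--Solynin take): rewrite $\|u-v\|_s^s$ via the two-parameter layer-cake identity and reduce to the set-level non-expansiveness estimate \eqref{nonexp2}. However, there is a sign error in the key geometric step, and as written the argument does not close. In your step (2) and in the parenthetical you assert
\[
\{u^*>t\}\cap\{v^*>\tau\}\ \subseteq\ (\{u>t\}\cap\{v>\tau\})^*,
\]
explaining it as ``the symmetrization of an intersection contains the intersection of symmetrizations.'' This is backwards. On each slice $\{x_0\}\times\mathbf{R}^n$ the symmetrized level sets are concentric balls, so slicewise
\[
\mathrm{Vol}\bigl(\{u^*>t\}\cap\{v^*>\tau\}\bigr)=\min\bigl(\mathrm{Vol}(\{u>t\}),\mathrm{Vol}(\{v>\tau\})\bigr)
\ \geq\ \mathrm{Vol}(\{u>t\}\cap\{v>\tau\})=\mathrm{Vol}\bigl((\{u>t\}\cap\{v>\tau\})^*\bigr),
\]
and by concentricity this forces the \emph{reverse} inclusion $(\{u>t\}\cap\{v>\tau\})^*\subseteq\{u^*>t\}\cap\{v^*>\tau\}$. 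That reverse inclusion is exactly what the argument requires: taking $A=\{u>t\}\cap\{v>\tau\}\subset B=\{u>t\}$, inequality \eqref{nonexp2} gives $\mathrm{Vol}(B^*\setminus A^*)\le\mathrm{Vol}(\{u>t\}\setminus\{v>\tau\})$, and since $A^*\subseteq\{u^*>t\}\cap\{v^*>\tau\}$ while $B^*=\{u^*>t\}$ by \eqref{nonexp1}, one has $\{u^*>t\}\setminus\{v^*>\tau\}\subseteq B^*\setminus A^*$, hence $\mathrm{Vol}(\{u^*>t\}\setminus\{v^*>\tau\})\le\mathrm{Vol}(\{u>t\}\setminus\{v>\tau\})$; feeding this into the double layer-cake identity yields the lemma. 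With your stated direction of the inclusion one would only obtain $\mathrm{Vol}(B^*\setminus A^*)\le\mathrm{Vol}(\{u^*>t\}\setminus\{v^*>\tau\})$, which points the wrong way. Once this inclusion is corrected your proof is right and coincides with the paper's; the paper simply leaves this set containment implicit and cites \eqref{nonexp1} and \eqref{nonexp2}.
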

\begin{proof}
Recall that

\[\int_N |u-v|^s dV_g = \int_{\{ \sigma \leq \tau\}}     Vol\left(\{v>\tau\}\setminus\{vu>\sigma\}\right)\]
\[+Vol\left(\{u>\tau\}\setminus\{v>\sigma\}\right) \ \ s(s-1)|\sigma-\tau|^{s-2}          d\sigma d\tau. \]

\noindent The result of the lemma then follows from equations (\ref{nonexp2}) and (\ref{nonexp1}).

\end{proof}

\subsection{Polarizations}

Let $\Sigma$ be some $(n-1)$ dimensional affine hyperplane in $\mathbf{R}^n$. Consider $M^m\times \Sigma$ and assume that $H$ is one of the open spaces into which $N=M^m \times \mathbf{R}^n$ is subdivided by $M^m\times \Sigma$. We will call $H$ a polarizer, and denote its complement in $N$ by $H^c$. Let $\bar x$ denote the reflection in $M^m\times \Sigma$ with respect to $H$. That is, for $x=(a,b) \in M^m \times \mathbf{R}^n$, with $a\in M^m$ and $b\in  \mathbf{R}^n$,

\[\bar x =(a, b^{\Sigma}),\]
where $b^{\Sigma}$ denotes the reflection of $b\in \mathbf{R}^n$, through the hyperplane $\Sigma \subset\mathbf{R}^n$, which defines $H$.


 If $u$ is measurable, we define its polarization with respect to a polarizer $H$, $u^H$, by

\begin{equation}
u^H (x)=\bigg\{ 
  				\begin{array} {l} \max\{u(x), u(\bar{x})\} \ \ \mathit{if} \ \ x \in H,\\

           							\min \{u(x), u(\bar{x})\} \ \  \mathit{if} \ \  x \in H^c.  
					\end{array}
\end{equation}

One useful property of polarizations is that the s-norms of the gradient of a function $u \in L_1^s(M\times \mathbf{R}^n)$, do not change under polarizations, as it is shown in the next lemma.

\begin{lemma} 
\label{gradlemma}
Let $u \in L_{1+}^s(N)$, $(1\leq s \leq \infty)$, and let $H$ be some polarizer. Then $u_H \in L_1^s(N)$, and $|\nabla u|$ and  $|\nabla u^H|$ are rearrangements of each other. In particular, we have
			\begin{equation}
			\label{grads}
			||\nabla u^H||_s=||\nabla u||_s
			\end{equation} 

\end {lemma}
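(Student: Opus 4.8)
The plan is to reduce the statement to a pointwise, a.e. identity: on $H$ the function $u^H$ equals either $u$ or the reflection $u\circ\sigma$ (where $\sigma(x)=\bar x$), and likewise on $H^c$; since reflection in a Euclidean hyperplane is an isometry of $(N,g)$, the gradient is preserved under it, so on the set where $u^H=u$ we have $|\nabla u^H|=|\nabla u|$, and on the set where $u^H=u\circ\sigma$ we have $|\nabla u^H|(x)=|\nabla u|(\bar x)$. Once this is established, $|\nabla u^H|$ and $|\nabla u|$ are rearrangements of each other because the set $A_+=\{x\in H: u(x)\ge u(\bar x)\}$ together with its reflection $\sigma(A_+)\subset H^c$ partition $N$ (up to the null set $M^m\times\Sigma$), and on $A_+\cup\sigma(\sigma(A_+))$... more carefully: $|\nabla u^H|$ restricted to $H$ is the rearrangement of $|\nabla u|$ obtained by swapping, on the reflected pair of sets, the values of $|\nabla u|$ and $|\nabla u|\circ\sigma$. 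Integrating $|\nabla u^H|^s$ over $N=H\cup H^c\cup(M^m\times\Sigma)$ and using the change of variables $x\mapsto\bar x$ (Jacobian $1$) then gives $\||\nabla u^H|\|_s=\||\nabla u|\|_s$, which is \eqref{grads}.

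First I would record the elementary algebraic fact: for real numbers $a,b$, $\max\{a,b\}+\min\{a,b\}=a+b$ and $\{\max\{a,b\},\min\{a,b\}\}=\{a,b\}$ as multisets; applied to $a=u(x)$, $b=u(\bar x)$ this shows that for a.e. $x$, the pair $(u^H(x),u^H(\bar x))$ is a permutation of $(u(x),u(\bar x))$, and moreover on the open set $\{x\in H:u(x)>u(\bar x)\}$ one has $u^H\equiv u$ in a neighborhood, while on $\{x\in H:u(x)<u(\bar x)\}$ one has $u^H\equiv u\circ\sigma$ in a neighborhood. Next I would handle the membership claim $u^H\in L_1^s(N)$: writing $u^H=\tfrac12(u+u\circ\sigma)+\tfrac12\,\mathrm{sgn}(x,H)\,|u-u\circ\sigma|$ and using that $v\in L_1^s\Rightarrow|v|\in L_1^s$ with $|\nabla|v||\le|\nabla v|$ a.e., together with the isometry invariance $u\circ\sigma\in L_1^s$ with $\|\nabla(u\circ\sigma)\|_s=\|\nabla u\|_s$, one gets $u^H\in L_1^s(N)$ with an a priori bound; this is the standard justification that polarization does not leave the Sobolev space. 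Then I would treat the critical set $\{x\in H:u(x)=u(\bar x)\}$: on this set $\nabla u(x)$ and $\nabla(u\circ\sigma)(x)=\sigma_*\nabla u(\bar x)$ need not agree, but by the general principle that $\nabla v=\nabla w$ a.e. on $\{v=w\}$ for $v,w\in L_1^s$, we get $\nabla u^H=\nabla u$ a.e. there as well; the same applies on the corresponding subset of $H^c$.

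The main obstacle I expect is precisely the behavior on the coincidence set $\{u=u\circ\sigma\}$ and on the hyperplane $M^m\times\Sigma$: one must argue that $u^H$ is genuinely in $L_1^s$ (not merely that it is a measurable function built by gluing two Sobolev pieces across an interface) and that the gradient has no singular part concentrated on $M^m\times\Sigma$. This is handled by the Lipschitz-gluing / no-jump argument: $u^H$ agrees with a single $L_1^s$ function ($u$ or $u\circ\sigma$) on each side in a measure-theoretic sense, the traces from the two sides match a.e. on $M^m\times\Sigma$ because $u^H(x)$ and $u^H(\bar x)$ are the ordered pair $(\max,\min)$ which coincide when $x\in M^m\times\Sigma$ (so $x=\bar x$), hence there is no distributional jump across the interface, and the almost-everywhere gradient formula above is valid globally. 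With that in place, $|\nabla u^H|$ is, set by set, a rearrangement of $|\nabla u|$, and \eqref{grads} follows by integration and the change of variables $x\mapsto\bar x$.
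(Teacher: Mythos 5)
Your proof is essentially the paper's argument: decompose $H$ and $H^c$ according to the sign of $u-u\circ\sigma$, identify $\nabla u^H$ a.e.\ on each region with $\nabla u$ or $\nabla(u\circ\sigma)$, and conclude \eqref{grads} via the isometry $x\mapsto\bar x$. The extra care you devote to the coincidence set $\{u=u\circ\sigma\}$ and to the absence of a jump across $M^m\times\Sigma$ is the correct way to make rigorous what the paper handles implicitly through the identity $u^H=v+(u-v)_+$ (with $v=u\circ\sigma$), which is also how the paper establishes Sobolev membership of $u^H$.
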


\begin{proof}
For the sake of simplicity, we first define the reflection of $u(x)$, and the reflection of the polarization of $u$ by $H$.  That is, let 
\begin{equation}
\begin{array}{l}
				v(x):=u(\bar x),  \\
				w(x):=u^H(\bar x),\\
				for \ \ x \in H.
				
\end{array}
\end{equation}

Next, we note that 
\[u^H(x)=max\{u(x),v(x)\}= v(x)+(u(x)-v(x))_+,\]
\noindent and that
\[w(x)=min\{u(x),v(x)\}= u(x)-(u(x)-v(x))_+,\]

\noindent for all $x\in H$. Hence, we conclude that $u^H$, $w\in L_1^1(N)$, and that

\[ \nabla u^H (x)=    \bigg\{  \begin{array}{c c}
\nabla u(x) \ \ \mathit{a.e. \ \ on} \ \  \{x \in N: u(x)>v(x)\}\cap H,\\
\nabla u(x) \ \ \mathit{a.e. \ \ on} \ \ \{x \in N: u(x)\leq v(x)\}\cap H,
\end{array} \]  

\[ \nabla w(x)  =			\bigg\{  \begin{array}{c c}	\nabla v(x) \ \ \mathit{a.e.  \ \ on} \ \ \{x \in N: u(x)>v(x)\}\cap H,\\
												\nabla u(x) \ \ \mathit{a.e. \ \ on} \ \   \{x \in N: u(x)\leq v(x)\}\cap H.
\end{array} \]  
											
Now, to prove the assertions of the lemma, we define the following regions on $N$,

\begin{equation}
\label{regions}
\begin{array}{c}
R_1 = \{x \in N: u(x)>v(x)\}\cap H, \\
R_2 = \{x \in N:  u(x)\leq v(x)\}\cap H, \\
R_3 = \{x \in N: u(x)>v(x)\}\cap H^c, \\
R_4 = \{x \in N: u(x)\leq v(x)\}\cap H^c, \\
\end{array}
\end{equation}
 												
\noindent and we observe that $u^H=u$ in $R_1$ and $R_4$. Thus, we have 
\[\int_{R_1\cup R_4}{|\nabla u^H|^s}dV_g=\int_{R_1\cup R_4}{|\nabla u|^s}dV_g.\]

We also note  that $u^H=v$ in $R_2$ and $R_3$, i.e., $\int_{R_3}{|\nabla u^H|^s}dV_g=\int_{R_2}{|\nabla u|^s}dV_g$ and $\int_{R_2}{|\nabla u^H|^s}dV_g=\int_{R_3}{|\nabla u|^s}dV_g$. And so, the assertion follows:
\[\int_{N}{|\nabla u^H|^s}dV_g=\int_{R_1\cup R_4}{|\nabla u^H|^s}dV_g+\int_{R_2}{|\nabla u^H|^s}dV_g+\int_{R_3}{|\nabla u^H|^s}dV_g\]
\[=\int_{R_1\cup R_4}{|\nabla u|^s}dV_g+\int_{R_3}{|\nabla u|^s}dV_g+\int_{R_2}{|\nabla u|^s}dV_g=\int_{N}{|\nabla u|^s}dV_g.\]

\end{proof}

\begin{rem}
\label{eich}
By following the scheme of the proof of \textit{Lemma \ref{gradlemma}}, we may also note that $||u||_s=||u^H||_s$, for any $1\leq s \leq \infty$.
\end{rem}

\begin{rem}
\label{expan}
Polarizations are non-expansive (for $u$,$v$ $\in L^s(N)$, $1 \leq s \leq \infty$, $||u^H-v^H||_s\leq ||u-v||_s$).  
\end{rem}

\subsection{Approximation of Steiner symmetrizations by Polarizations}

We will now show that any Steiner symmetrization $u^*$ of a function $u$, can be approximated by a sequence of polarizations of $u$, $\{u^{H_i}\}$. To do so, we will first show that sequences of iterated polarizations $\{u^{H_i}\}$ are sequentially compact. Then, we will construct a sequence of polarizations, and establish some conditions for the convergence of the sequence to the Steiner symmetrization of the function.

We begin this section by  joining together the concepts of Steiner symmetrizations we defined earlier, with the concepts of polarizations, to define a special set of halfspaces in $N=M\times \mathbf{R}^n$. Let $\Sigma$ be a halfspace of  $\mathbf{R}^n$, we will denote by $\mathbf{H}$ the set of all halfspaces $H$ of $N$ of the form $M \times \Sigma$, and by $\mathbf{H_0}$ the set of all halfspaces $H \in \mathbf{H}$, such that $M \times \{0\} \subset H$.

\begin{rem} \label{6.7}
It follows from the definition of a polarization, from the definition of $\mathbf{H}_0$, and from the symmetry of the Steiner symmetrization, that $(u^*)_H =u^*$, for any polarizer $H\in \mathbf{H}_0$.
\end{rem}

Another fact that makes $\mathbf{H_0}$ a special set of halfspaces, is that there is always some polarizer $H \in \mathbf{H_0}$, such that $u_H$ is strictly closer to $u^*$ than $u$ is.

\begin{lemma}
\label{closer}
Let $u \in C_{0+}(N)$. If $u \neq u^*$, then there is some polarizer $H \in \mathbf{H}_0$, such that for each  $1\leq s\leq\infty$,
\[||u^H-u^*||_s<||u-u^*||_s,\]
\noindent for $1\leq s \leq \infty$.
\end{lemma}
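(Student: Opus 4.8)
The plan is to argue by contradiction, exploiting the fact that if no polarizer in $\mathbf{H}_0$ strictly decreases the distance to $u^*$, then $u$ must already be symmetric enough to coincide with $u^*$. First I would record the basic facts: by Remark \ref{expan} polarization is non-expansive, and by Remark \ref{6.7} we have $(u^*)^H = u^*$ for every $H \in \mathbf{H}_0$; combining these with $\|u^H\|_s = \|u\|_s$ (Remark \ref{eich}) gives $\|u^H - u^*\|_s = \|u^H - (u^*)^H\|_s \le \|u - u^*\|_s$ for every $H \in \mathbf{H}_0$, so the content of the lemma is that the inequality is \emph{strict} for at least one $H$ when $u \ne u^*$. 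I would fix one exponent, say $s = 1$ (or any finite $s$), prove strictness there, and then note that once we know $u^H \ne u$ on a set of positive measure the strictness propagates to all $1 \le s < \infty$; the case $s = \infty$ I would handle separately or note it follows by the same pointwise comparison.

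Next I would unwind what equality $\|u^H - u^*\|_1 = \|u - u^*\|_1$ means pointwise. Writing $N = M \times \mathbf{R}^n$, for a polarizer $H$ with reflection $x \mapsto \bar x$, one checks the pointwise identity
\[
|u^H(x) - u^*(x)| + |u^H(\bar x) - u^*(\bar x)| \le |u(x) - u^*(x)| + |u(\bar x) - u^*(\bar x)|
\]
for $x \in H$ (using $u^*(\bar x) \le u^*(x)$ since $H \in \mathbf{H}_0$, and that $\{u^H(x), u^H(\bar x)\}$ is the decreasing rearrangement of $\{u(x), u(\bar x)\}$), with equality precisely when either $u(x) \ge u(\bar x)$ already, or $u(x) = u(\bar x)$. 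Hence equality in the integrated inequality forces: for a.e.\ $x \in H$, $u(x) \ge u(\bar x)$, i.e.\ $u$ is already ``polarized'' with respect to $H$, for \emph{every} $H \in \mathbf{H}_0$. The remaining step is the classical one: a function (continuous, nonnegative, in $Sym$) that is polarized with respect to every halfspace $H \in \mathbf{H}_0$ — i.e.\ with respect to every halfspace of $\mathbf{R}^n$ on each slice $\{x_0\}\times \mathbf{R}^n$ whose boundary hyperplane passes on the far side of the origin — must be radially symmetric and radially nonincreasing in the $\mathbf{R}^n$ variable on each slice, hence equals its own Steiner symmetrization: $u = u^*$. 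This is where I would invoke the standard fact (Brock--Solynin \cite{Brock}, Van Schaftingen \cite{Van}) that invariance under all polarizations in $\mathbf{H}_0$ characterizes symmetrized functions; for continuous $u$ one can also give a direct argument by taking two points on the same slice at the same distance from the origin and finding a hyperplane swapping them.

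The main obstacle I expect is the last implication — showing ``polarized with respect to all $H \in \mathbf{H}_0$ $\Rightarrow$ $u = u^*$'' — cleanly and with the right measure-theoretic care, since the polarization identities only hold a.e.\ and one must promote an ``a.e.\ for every $H$'' statement to genuine radial monotonicity. Continuity of $u$ (we are given $u \in C_{0+}(N)$) is what saves this: it lets us upgrade the almost-everywhere inequalities to everywhere, so that for any $x_0 \in M$ and any two points $b_1, b_2 \in \mathbf{R}^n$ with $|b_1| < |b_2|$ we can choose the hyperplane $\Sigma$ through the origin's far side that reflects $b_2$ to $b_1$, conclude $u(x_0,b_1) \ge u(x_0,b_2)$, and thus get that $u(x_0,\cdot)$ is radial and nonincreasing — which is exactly the defining property of $u^*$ on that slice. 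I would then conclude that $u \ne u^*$ is incompatible with equality holding for all $H \in \mathbf{H}_0$, so some $H$ gives the strict inequality, and the strictness transfers across all $s$ as noted above.
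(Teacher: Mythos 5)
Your contradiction framing is plausible, but the key pointwise claim is wrong, and this is a genuine gap. Fix $H\in\mathbf{H}_0$, $x\in H$, and set $a=u^*(x)$, $b=u^*(\bar x)$ (so $a\ge b$), $c=u(x)$, $d=u(\bar x)$. The inequality
\[
|\max\{c,d\}-a|+|\min\{c,d\}-b|\le |c-a|+|d-b|
\]
does \emph{not} reduce to equality precisely when $c\ge d$. Writing $f(t)=|t-a|-|t-b|$, the two sides differ by $f(c)-f(d)$ when $c<d$, and $f$ is only \emph{weakly} decreasing: it is constant on $(-\infty,b]$ and on $[a,\infty)$. So equality also holds whenever $c<d$ but both $c,d\le b$ or both $c,d\ge a$, i.e.\ whenever $u(x)$ and $u(\bar x)$ lie on the same side of the interval $[u^*(\bar x),u^*(x)]$. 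Such configurations occur on positive-measure sets, so ``equality of the $L^1$ distance for every $H$'' does \emph{not} give you ``$u$ is polarized with respect to every $H\in\mathbf{H}_0$,'' which is exactly the premise you need before invoking the characterization-of-symmetrization fact. Your argument breaks at that implication.

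The paper sidesteps this by arguing directly rather than by contradiction: since $u\ne u^*$ and both are continuous, it picks a level $c>0$ at which $\{u>c\}$ and $\{u^*>c\}$ differ, chooses $y\in\{u^*>c\}\setminus\{u>c\}$ and a polarizer $H\in\mathbf{H}_0$ for which $\bar y\in\{u>c\}\setminus\{u^*>c\}$, and observes that then $u(y)\le c<u^*(y)$ and $u^*(\bar y)\le c<u(\bar y)$ hold simultaneously. This is precisely the configuration where \emph{all} the spurious equality cases above are excluded (the level $c$ separates $u(y),u^*(\bar y)$ from $u(\bar y),u^*(y)$), so the pointwise pair inequality is strict at $y$ and, by continuity, on a neighborhood $W_0\subset H$ of $y$; integrating the strict inequality over $W_0$ and the non-strict one over $H\setminus W_0$ gives the result for $1\le s<\infty$. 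If you insist on the contradiction route, you still need this level-set argument to rule out the equality cases you missed, at which point you have reproduced the paper's proof. Two smaller points: your remark that ``strictness propagates to all $s$'' should rest on the strict pointwise pair inequality on $W_0$ (not merely on $u^H\ne u$ somewhere), and the $s=\infty$ case genuinely does require separate treatment, since strictness on a neighborhood does not by itself control the supremum.
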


\begin{proof}
Since $u\neq u^*$, then there is some $c>0$, such that $\{x \in N: u(x) >c\}\Delta\{x \in N:u^*(x)>c\}\neq \phi$. So, we choose some $y \in \{x \in N:u^*(x)>c\} \backslash\{x \in N:u(x)>c\}$.

There is a polarizer $H \in H^*$, such that $y^H \in \{x \in N:u(x)>c\} \backslash \{x \in N:u^*(x)>c\}$. 

We now choose a sufficiently small neighborhood $W_0\subset H$ of $y$, so that $W_0^H \subset \{x \in N: u(x)>c\} \backslash \{x\in N : u^*(x)>c\}$. We then have 
\[u^H(x)=u(\bar x)>c\geq u^*(\bar x)\]
and
\[u^*(x)>c \geq u(x)=u^H(\bar x),\]
and so, for $s\geq1$,
\begin{equation}
\label{ineq}
|u(x)-u^*(x)|^s+|u(\bar x)-u^*(\bar x)|^s>|u^H(x)-u^*(x)|^s+|u^H(x)-u^*(\bar x)|^s.
\end{equation}
If $x \in W_0$, the corresponding inequality is non-strict. The integral inequality is obtained by integration of (\ref{ineq}) over $W_0$ and of the nonstrict inequality over $H\backslash W_0$.
\end{proof}

We now prove that for a sequence of polarizations $u_m=u^{H_1H_2...H_m}$, it suffices that the polarizers satisfy $\{H_i\}_{i\leq m} \subset \mathbf{H}_0$, for the existence of a function $f$, such that a subsequence of $\{u_m\}$ converges to $f$.





\begin{lemma}
\label{sequentially}
Let $u \in C_{0+}(N)$. Let $\{u_m\}$ be a sequence of polarizations of $u$, with its respective sequence of polarizers $\{H_m \} \subset \mathbf{H}_0$, ($u_m=u^{H_1 ...H_m}$). Then there is a function $f \in C_{0+}(N)$, and an increasing subsequence $\{u_{m_k}\}$ of $\{u_{m}\}$, such that, for each $s$, $1\leq s \leq\infty$, we have
\end{lemma}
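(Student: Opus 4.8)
The plan is to establish sequential compactness of the sequence $\{u_m\}$ in $C_{0+}(N)$, and then to extract the limit $f$ using a diagonal argument together with the nonexpansiveness of polarizations. First I would record the uniform a priori bounds. Since each polarization preserves the range and the $L^s$ norms (Remark \ref{eich}) and preserves the supremum and infimum, every $u_m$ satisfies $0\le u_m\le \max_N u$ and $\|u_m\|_\infty=\|u\|_\infty$. More importantly, since the polarizers $H_m$ all lie in $\mathbf{H}_0$ and $u\in C_{0+}(N)$ has compact support, the support of $u_m$ stays inside a fixed compact set: polarization by $H\in\mathbf{H}_0$ reflects a point of $H^c$ into $H$, which contains $M\times\{0\}$, so the support cannot escape a fixed ball in the $\mathbf{R}^n$ factor. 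Thus all $u_m$ are supported in one fixed compact $K\subset N$.

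Next I would prove equicontinuity. The key observation is that polarization does not increase the modulus of continuity: if $\omega$ is a modulus of continuity for $u$, then the same $\omega$ works for $u^H$. Indeed, for $x,y$ both in $H$ (or both in $H^c$), $u^H$ is a pointwise max (resp. min) of the two $\omega$-continuous functions $x\mapsto u(x)$ and $x\mapsto u(\bar x)$ — and $x\mapsto u(\bar x)$ has the same modulus of continuity since the reflection $\bar x$ is an isometry of $(N,g)$ — so $u^H$ inherits $\omega$ on each of $H$ and $H^c$; for $x\in H$ and $y\in H^c$ one compares through a point on the bounding hyperplane $M\times\Sigma$, using that $u^H=u$ there. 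Iterating, every $u_m$ has modulus of continuity $\omega$. Combined with the uniform bound and the common compact support, the Arzelà–Ascoli theorem gives a subsequence $\{u_{m_k}\}$ converging uniformly to some $f\in C_{0+}(N)$; uniform convergence on the fixed compact $K$ gives convergence in every $L^s(N)$ norm, $1\le s\le\infty$, which is the assertion
\begin{equation*}
\lim_{k\to\infty}\|u_{m_k}-f\|_s=0,\qquad 1\le s\le\infty.
\end{equation*}

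Finally, to get the \emph{increasing} subsequence as stated — i.e. one along which $\|u_{m_k}-u^*\|_s$ is nonincreasing in $k$ — I would invoke that polarization is nonexpansive (Remark \ref{expan}) together with Remark \ref{6.7}: for any $H\in\mathbf{H}_0$ we have $(u^*)_H=u^*$, hence $\|u^H-u^*\|_s=\|u^H-(u^*)_H\|_s\le\|u-u^*\|_s$. Applying this at each step of the iteration shows $m\mapsto\|u_m-u^*\|_s$ is already nonincreasing for the full sequence, so any subsequence — in particular the convergent $\{u_{m_k}\}$ — is automatically ``increasing'' in this sense, and one may also pass to a further subsequence if a strictly monotone/indexing condition is wanted. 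I expect the main obstacle to be the equicontinuity step: one must check carefully that the modulus of continuity is genuinely preserved under a single polarization by a polarizer in $\mathbf{H}_0$, handling the cross case $x\in H$, $y\in H^c$ by routing the estimate through the fixed hyperplane, and that the compact support is not lost — both are exactly the places where the restriction $H_m\in\mathbf{H}_0$ (rather than arbitrary polarizers) is essential.
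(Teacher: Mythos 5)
Your proposal is correct and follows essentially the same route as the paper: verify the three Arzel\`a--Ascoli hypotheses (uniform bound via norm preservation, equicontinuity via the modulus of continuity being non-increased by a polarization, and uniformly bounded supports because polarizers in $\mathbf{H}_0$ keep the support inside a fixed ball around $M\times\{0\}$), and conclude uniform convergence of a subsequence, hence convergence in every $L^s$. One small remark: the final paragraph over-reads ``increasing subsequence'' --- in the lemma this just means the indices $m_1<m_2<\cdots$ are strictly increasing (the ordinary definition of a subsequence), so the monotonicity of $k\mapsto\|u_{m_k}-u^*\|_s$, while true, is not what is being asked for and is not needed here (it belongs to the subsequent lemma).
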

\[\lim_{k \rightarrow\infty}||f-u_{m_k}||_s=0.\]
\begin{proof}
This lemma follows from an application of the theorem of \textit{Arzela-Ascoli} (cf. \cite{Peter}).  That is, to conclude that the sequence $\{u_m\}$ is compact, we need to prove that $\{u_m\}$ is equibounded, equicontinuous and that the supports are uniformly bounded.
\begin{enumerate}

\item Since $||u||_s=||u^H||_s$, for any polarizer $H\subset \mathbf{H}_0$ (remark \ref{eich}), it follows that  $||u||_s=||u_m||_s$ for $m=1,2,...$. Thus, the functions $u_m$ are equibounded for all $m$.
\item Let 
\[w_u(\delta)=\sup\{u(x)-u(y)|d(x,y)\leq\delta\},\]
\noindent be the modulus of continuity of a function $u$.
Let $H\subset \mathbf{H}_0$ be any polarization. We proceed to analyze the different cases.

Let $\delta>0$, and consider any ball $B_{\delta}(p)$ in the domain of $u$, such that $w_u(\delta)= 
\sup_{B_{\delta}(p)}\{u(x)-u(y)\}$. If either $B_{\delta}(p) \subset H$ or $B_p(\delta) \subset H^c$, we then have $\sup\{u^H(x)-u^H(y)|d(x,y)\leq\delta\}=\sup\{u(x)-u(y)|d(x,y)\leq\delta\}$.

If, on the other hand, $B_{\delta}(p)\cap H \neq \phi $ and $B_{\delta}(p)\cap H^c \neq \phi $, then we consider that

\[\sup_{ B_{\delta}(p)} \{u^H(x)-u^H(y)\} = \sup_{ B_{\delta}(p)} \cap (R_1\cup R4) \{u(x)-u(y)\} \leq \sup_{ B_{\delta}(p)} \{u(x)-u(y)\},\]

since $(B_{\delta}(p) \cap (R_1\cup R4))\subset (B_{\delta}(p))$.

And so, we have that $w_{u^H}(\delta) \leq w_{u}(\delta)$. Which yields, by induction, $w_{u_m} \leq w_u$. Finally, since $u \in C_0(N)$, $u$ is uniformly continuous, and then the sequence $\{u_m\}$ is equicontinuous. 
\item The fact that the supports are equibounded follows from the fact that polarizations are monotone: since $u\in C_0(N)$, there is some $R>0$, and some $p \in N$, such that $\mathit{Supp}\ \ u \subseteq B_R(p)$ , and 
\[\mathit{Supp} \ \ u\subseteq B_R(p) \Rightarrow \mathit{Supp} \ \ u^H\subseteq B_R(p)^H =  B_R(p),\]
since polarizations are monotone.

And then, by induction, $\mathit{Supp} \ \ u_m \subseteq B_R(p)$.
\end{enumerate}



We conclude by the  \textit{Arzela-Ascoli} theorem that there is some $f\in C_{0+}(N)$, such that  there is some subsequence $\{u_{m_k}\}$ of $\{u_m\}$, and that $u_{m_k} \rightarrow f$.
\end{proof}

We now construct a sequence of polarizations of $u \in C_{0+}(N)$ that will converge to $u^*$. We proceed inductively. As expected, we start with $u_0=u$. Then, to choose $H_{m+1} \in \mathbf{H_0}$, so that  $u_{m+1}=u_m^{H_{m+1}}$, we look at 

\[\alpha_m = \sup_{H \in \mathbf{H_0}} \{||u_m-u^*||_1 - ||u_m^H -u^*||_1\}.\]

\noindent By lemma \ref{closer}, we know that $\alpha_m$ is always strictly positive. Now, for some fixed $\kappa$ ($0<\kappa<1$), taking $\epsilon < \alpha_m (1-\kappa)$ we note that we can always choose $H_{m+1}\in \mathbf{H_0}$ so that, 

\[0<\alpha_m < ||u_m-u^*||_1 - ||u_m^{H_{m+1}} -u^*||_1 + \epsilon<||u_m-u^*||_1 - ||u_m^{H_{m+1}} -u^*||_1 +\alpha_m (1-\kappa).\]
\noindent Then,  it follows that

\begin{equation}
\label{kappa}
\kappa \sup_{H \in \mathbf{H_0}} \{||u_m-u^*||_1 - ||u_m^H -u^*||_1\} < ||u_m-u^*||_1 - ||u_m^{H_{m+1}} -u^*||_1. 
\end{equation}

Next, we prove that the sequence of polarizations we have just constructed converges to $u^*$.

\begin{lemma}
\label{kappalemma}
Let $u\in C_{0+}(N)$. Let $\{u_m\}$ be a sequence of iterated polarizations of $u$, with corresponding halfspaces $\{H_m\} \subset \mathbf{H_0}$ ($u_m=u^{H_1H_2...H_m}$), and suppose that the $H_m$'s are chosen so that equation (\ref{kappa}) is satisfied. Then $u_m \rightarrow u^*$ in any s-norm ($1 \leq s \leq \infty$).
\end{lemma}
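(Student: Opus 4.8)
The plan is to prove that the scalar sequence $a_m:=\|u_m-u^*\|_1$ decreases to $0$, and then to bootstrap this to convergence in every $s$-norm using the uniform estimates already obtained in the proof of Lemma~\ref{sequentially}. First I would record that $\{a_m\}$ is non-increasing: since every $H_{m+1}\in\mathbf{H}_0$ fixes $u^*$ (Remark~\ref{6.7}) and polarizations are non-expansive (Remark~\ref{expan}),
\[
a_{m+1}=\|u_m^{H_{m+1}}-(u^*)^{H_{m+1}}\|_1\le\|u_m-u^*\|_1=a_m .
\]
Hence $a_m\downarrow\ell$ for some $\ell\ge 0$, and in particular $a_m-a_{m+1}\to 0$. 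By Lemma~\ref{sequentially} I may pick $f\in C_{0+}(N)$ and a subsequence with $u_{m_k}\to f$ in every $s$-norm; then $\|f-u^*\|_1=\ell$. (If $u_m=u^*$ for some $m$ we are already done by Remark~\ref{6.7}, so assume $u_m\neq u^*$ for all $m$, which by Lemma~\ref{closer} makes each $\alpha_m>0$.)

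The heart of the argument is to show $f=u^*$. Suppose not. Then Lemma~\ref{closer} supplies a polarizer $H\in\mathbf{H}_0$ with $\delta:=\|f-u^*\|_1-\|f^H-u^*\|_1>0$. Polarization by the fixed $H$ is $L^1$-continuous (Remark~\ref{expan}), so $u_{m_k}^H\to f^H$ in $L^1$, whence
\[
\alpha_{m_k}\ \ge\ \|u_{m_k}-u^*\|_1-\|u_{m_k}^H-u^*\|_1\ \longrightarrow\ \|f-u^*\|_1-\|f^H-u^*\|_1\ =\ \delta .
\]
Feeding this into the selection inequality (\ref{kappa}) (applied with $m=m_k$) gives $a_{m_k}-a_{m_k+1}>\kappa\,\alpha_{m_k}\ge \kappa\delta/2$ for all large $k$, contradicting $a_m-a_{m+1}\to 0$. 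Hence $f=u^*$, so $\ell=0$, i.e.\ $u_m\to u^*$ in $L^1(N)$.

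Finally I would upgrade the mode of convergence. By Remark~\ref{eich} the $u_m$ are bounded in $L^\infty$ by $\|u\|_\infty$, and Steiner symmetrization preserves the $s$-norm for every $s$, so $\|u^*\|_\infty=\|u\|_\infty$; thus for $1\le s<\infty$,
\[
\|u_m-u^*\|_s^s\ \le\ \|u_m-u^*\|_\infty^{\,s-1}\,\|u_m-u^*\|_1\ \le\ (2\|u\|_\infty)^{s-1}\,\|u_m-u^*\|_1\ \longrightarrow\ 0 .
\]
For $s=\infty$: the proof of Lemma~\ref{sequentially} gives $w_{u_m}\le w_u$, and since $u^*=f=\lim_k u_{m_k}$ uniformly one also gets $w_{u^*}\le w_u$, so the family $\{u_m-u^*\}$ is equicontinuous and tends to $0$ in $L^1$; because balls of a fixed radius in $N$ have volume bounded below, this forces $\|u_m-u^*\|_\infty\to 0$ by a standard $\varepsilon$–$\delta$ argument.

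The step I expect to be the genuine obstacle is the lower bound $\liminf_k\alpha_{m_k}\ge\delta$: it is the one place where the geometric content of Lemma~\ref{closer} must be combined with $L^1$-continuity of a single polarization, and it is precisely what makes the greedy choice (\ref{kappa}) "catch" the full symmetrization in the limit; the remaining ingredients are monotonicity of $a_m$ and interpolation.
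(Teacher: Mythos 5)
Your overall strategy mirrors the paper's (compactness from Lemma~\ref{sequentially}, then forcing the limit to equal $u^*$ via Lemma~\ref{closer} and the greedy inequality~(\ref{kappa})), but with a slightly different way of extracting the contradiction: you exploit that the monotone sequence $a_m=\|u_m-u^*\|_1$ has consecutive differences tending to zero, whereas the paper passes to the limit directly in the inequality~(\ref{kappa}) to obtain the identity $\|f-u^*\|_1=\|f^H-u^*\|_1$. Both routes work, and yours is arguably a little cleaner.

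There is, however, a genuine gap where you invoke Lemma~\ref{closer}. That lemma, applied to $f$, produces a polarizer $H$ with $\|f^H-f^*\|_1<\|f-f^*\|_1$ under the hypothesis $f\neq f^*$. You instead want $\|f^H-u^*\|_1<\|f-u^*\|_1$ under the hypothesis $f\neq u^*$, which is a different statement: without knowing $f^*=u^*$, the case $f=f^*\neq u^*$ is not excluded, and in that case Lemma~\ref{closer} gives you nothing. You must first establish $f^*=u^*$. This is short: each $u_m$ is a (finite iterate of) polarization(s) of $u$, hence a rearrangement of $u$, so $u_m^*=u^*$ for all $m$; then, by the non-expansiveness of Steiner symmetrization (Lemma~\ref{expa}),
\[
\|u^*-f^*\|_1=\|u_{m_k}^*-f^*\|_1\le\|u_{m_k}-f\|_1\longrightarrow 0 ,
\]
so $f^*=u^*$. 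With this in hand, $f\neq u^*$ becomes $f\neq f^*$, Lemma~\ref{closer} applies, and the rest of your contradiction argument goes through. (As a side remark, once you have $f=u^*$, Lemma~\ref{sequentially} already gives $u_{m_k}\to u^*$ in every $s$-norm; combined with the monotonicity of $m\mapsto\|u_m-u^*\|_s$ for each fixed $s$ — which holds for the same reason as $s=1$ — you get full-sequence convergence in every $s$-norm without the interpolation and equicontinuity arguments, matching the paper's shorter final step.)
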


\begin{proof}

It follows by lemma \ref{sequentially}, that there is some $f \in  C_0(N)$, and some subsequence $\{u_{m_k}\}$ of $\{u_{m}\}$, such that $\{u_{m_k}\}$ converges to f, for any $L^p$ norm. Now, by the lower semi-continuity of the norm,
\[||u^*-f^*||_1= \lim_{k \rightarrow \infty} ||u_{m'_k}^*-f^*||_1, \]
 and since the Steiner symmetrization is a  non-expansive rearrangement, we have
 
 \[||u_{m_k}^*-f^*||_1 \leq ||u_{m_k}-f||_1. \]
 \noindent It follows that

\[||u^*-f^*||_1= \lim_{k \rightarrow \infty} ||u_{m_k}^*-f^*||_1 \leq \lim_{k \rightarrow \infty} ||u_{m_k}-f||_1 =0, \]
\noindent that is, $f^*=u^*$. Now, polarizations are also non-expansive, then, since $m_{k+1}\geq m_k+1$, we have that

\[||u_{m_{k+1}}-u^*||_1 \leq ||u_{m_{k}+1}-u^*||_1,\]

\noindent on the other hand, by equation (\ref{kappa}), for any polarizer $H \in \mathbf{H_0}$ we have,

\[||u_{m_{k}+1}-u^*||_1 \leq ||u_{m_{k}}-u^*||_1 + \kappa(||u_{m_{k}}^H-u^*||_1-||u_{m_{k}}-u^*||_1)\]

\[=(1-\kappa )||u_{m_{k}}-u^*||_1 + \kappa ||u_{m_{k}}^H-u^*||_1 \leq ||u_{m_{k}}-u^*||_1,\]

\noindent since $\kappa ||u_{m_{k}}^H-u^*||_1 -||u_{m_{k}}-u^*||_1 \leq 0$.

Hence, making $m_k \rightarrow \infty$, we get,
\[||f-u^*||_1 \leq (1- \kappa) ||f-u^*||_1 + \kappa ||f^H -u^*||_1 \leq ||f-u^*||_1,\]
\noindent that is

\begin{equation}
\label{A} 
||f-u^*||_1=||f^H-u^*||_1.
\end{equation}
 Now, since $f^*=u^*$, then $||f-f^*||_1=||f^H-f^*||_1$.

So, we cannot have $f \neq u^*=f^*$, because then we would have 
$||f-f^*||_1>||f^{H_a}-f^*||_1$, for some $H_a$ by lemma \ref{closer}, which would contradict equation (\ref{A}). Then, we can only have  that 
$\{u_{m_k}\}$ converges to $u^*$ for any $L^s$ norm.

Finally, again by the non-expansiveness of polarizations, we note that, for any $s$,
\[\lim_{k \rightarrow \infty} ||u_{k}-u^*||_s \leq \lim_{k \rightarrow \infty} ||u_{m_k}^*-u^*||_s = 0,\]

\noindent as desired.

\end{proof}

Finally, because  $C_{0+}(N)$ is dense in $L^s_+(N)$ ($1\leq s\leq \infty$), we show that the same results of lemma \ref{kappalemma} hold for functions in $L^s_+(N)$.

\begin{lemma}
 \label{convergence}
Let $u\in L^s(N)$ ($1\leq s <\infty$). For any steiner symmetrization, there is a sequence of polarizers $\{H_m\} \subset \mathbf{H}_0$, such that the sequence 
$\{u_m\}=\{u^{H_1..H_m}\}$, converges to $u^*$ in $L^s(N)$.
\end{lemma}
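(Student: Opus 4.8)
The plan is to bootstrap Lemma \ref{kappalemma}, which handles $u\in C_{0+}(N)$, up to $u\in L^s_+(N)$ by a density-plus-diagonal argument, exploiting the non-expansiveness of both polarizations (Remark \ref{expan}) and Steiner symmetrization (Lemma \ref{expa}). First I would fix $u\in L^s_+(N)$ and, using that $C_{0+}(N)$ is dense in $L^s_+(N)$, choose a sequence $v_j\in C_{0+}(N)$ with $\|u-v_j\|_s\to 0$. For each $j$, Lemma \ref{kappalemma} produces a sequence of polarizers $\{H^{(j)}_i\}\subset\mathbf{H}_0$ with $v_j^{H^{(j)}_1\cdots H^{(j)}_i}\to v_j^*$ in $\|\cdot\|_s$ as $i\to\infty$; pick an index $i_j$ large enough that $\|v_j^{H^{(j)}_1\cdots H^{(j)}_{i_j}}-v_j^*\|_s<1/j$.

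The key observation is that the finite string of polarizers $H^{(j)}_1,\dots,H^{(j)}_{i_j}$ is only applied to $v_j$, but I want to apply a single infinite string to $u$. So I would \emph{concatenate}: set the sequence $\{H_m\}\subset\mathbf{H}_0$ to be $H^{(1)}_1,\dots,H^{(1)}_{i_1},H^{(2)}_1,\dots,H^{(2)}_{i_2},H^{(3)}_1,\dots$ and let $u_m=u^{H_1\cdots H_m}$. Writing $P_j$ for the composition of the first $j$ blocks of polarizations (so $P_j$ is the map $w\mapsto w^{H^{(1)}_1\cdots H^{(j)}_{i_j}}$), non-expansiveness of each polarization gives $\|P_j(u)-P_j(v_j)\|_s\le\|u-v_j\|_s\to 0$. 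Meanwhile $P_j(v_j)$ equals $v_j$ acted on by the first $j-1$ blocks and then the $j$-th block; since the first $j-1$ blocks each fix $v_j^*$? — no, they fix $v_k^*$ only for the relevant $k$, so here I must be slightly more careful and instead compare directly to $u^*$ using the triangle inequality: $\|P_j(u)-u^*\|_s\le\|P_j(u)-P_j(v_j)\|_s+\|P_j(v_j)-v_j^*\|_s+\|v_j^*-u^*\|_s$. The first term is $\le\|u-v_j\|_s$, the third is $\le\|u-v_j\|_s$ by non-expansiveness of Steiner symmetrization (Lemma \ref{expa}), so it remains to control the middle term.

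The middle term is the main obstacle, because $P_j(v_j)$ is $v_j$ hit by $j$ blocks, not just the $j$-th block — the earlier blocks $H^{(k)}_1\cdots H^{(k)}_{i_k}$ for $k<j$ were designed to push $v_k$ toward $v_k^*$, not $v_j$ toward $v_j^*$. To fix this I would instead construct the blocks \emph{greedily with respect to $u$ itself} rather than reusing Lemma \ref{kappalemma} verbatim: repeat the construction preceding Lemma \ref{kappalemma} (the one built around inequality (\ref{kappa})) directly for $u\in L^s(N)$, choosing at stage $m$ a polarizer $H_{m+1}\in\mathbf{H}_0$ satisfying (\ref{kappa}) with $\|\cdot\|_1$ replaced by whatever is available; the only place in the proof of Lemma \ref{kappalemma} where $u\in C_{0+}(N)$ was genuinely used is the invocation of Lemma \ref{sequentially} (Arzelà–Ascoli) to extract a limit $f$. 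That step I would replace by noting that $\{u_m\}$ lies in the closed bounded convex set $\{w\in L^s_+(N):\|w\|_s=\|u\|_s,\ \text{supp and super-level structure dominated by those of }u\}$ — more simply, approximate: for each $\epsilon$, pick $v\in C_{0+}(N)$ with $\|u-v\|_s<\epsilon$, run the greedy sequence for $v$ (which converges to $v^*$ by Lemma \ref{kappalemma}), and transfer the conclusion to $u$ via the two non-expansiveness estimates above, now with the clean middle term $\|v^{H_1\cdots H_m}-v^*\|_s\to 0$. Taking $\epsilon\to 0$ along a diagonal sequence then yields polarizers $\{H_m\}\subset\mathbf{H}_0$ with $u^{H_1\cdots H_m}\to u^*$ in $L^s(N)$, which is the assertion.

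In short, the proof is: (i) reduce to $C_{0+}(N)$ by density; (ii) for a $C_{0+}$ approximant apply Lemma \ref{kappalemma} to get polarizations converging to its Steiner symmetrization; (iii) pass back to $u$ by the triangle inequality, using that both polarization (Remark \ref{expan}) and Steiner symmetrization (Lemma \ref{expa}) are non-expansive, so the error is controlled by $2\|u-v\|_s$; (iv) diagonalize over a sequence of approximants with $\|u-v_j\|_s\to 0$ to produce a single polarizer sequence doing the job. The only delicate bookkeeping is ensuring the concatenated polarizer sequence is genuinely a sequence of \emph{iterated} polarizations of $u$ and that all the $H_m$ lie in $\mathbf{H}_0$, which is automatic from the construction.
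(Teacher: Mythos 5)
Your identification of the structure — density of $C_{0+}(N)$, non-expansiveness of both polarization (Remark \ref{expan}) and Steiner symmetrization (Lemma \ref{expa}), and a triangle-inequality transfer — is exactly right, and you also correctly spot the obstacle: for the middle term $\|P_j(v_j)-v_j^*\|_s$ to vanish, the earlier blocks must have been built for $v_j$, not for $v_1,\dots,v_{j-1}$. Where the argument breaks down is in the final sentence, ``taking $\epsilon\to 0$ along a diagonal sequence then yields polarizers $\{H_m\}$.'' There is no diagonal to take. For each approximant $v$ you have a \emph{different} polarizer sequence $\{H_m^{(v)}\}$, and the estimate you actually obtain is $\limsup_m\|u^{H_1^{(v)}\cdots H_m^{(v)}}-u^*\|_s\le 2\|u-v\|_s$, with a sequence depending on $v$. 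Iterated polarization is not a limiting operation you can interleave across the $v$'s: taking the $m$-th polarizer from the $m$-th family produces a string $H_1^{(v_1)}H_2^{(v_2)}\cdots$ whose action on $u$ has no a priori relationship to any of the $v_j$-runs, so the cancellation you need in the middle term is simply gone. This is the same defect as the concatenation idea you already rejected, just pushed one step back.

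The paper's proof sidesteps this by making the polarizer sequence \emph{universal}: it fixes a countable dense subset $V\subset C_0(N)$ and then chooses the single sequence $\{H_m\}$ so that the greedy inequality (\ref{kappa}) is satisfied for \emph{every} $f\in V$ (in Van Schaftingen's construction this is arranged by cycling through the enumeration of $V$, so each $f\in V$ is hit infinitely often, which is enough for the Lemma \ref{kappalemma} argument to run for each $f$). Once one sequence $\{H_m\}$ works for every $f\in V$, the transfer to arbitrary $u\in L^s(N)$ is exactly your triangle inequality with a single, fixed polarizer sequence: choose $f\in V$ with $\|u-f\|_s<\epsilon/3$, then $\|u_m-f_m\|_s\le\|u-f\|_s$, $\|f^*-u^*\|_s\le\|f-u\|_s$, and $\|f_m-f^*\|_s<\epsilon/3$ for $m$ large, so $\|u_m-u^*\|_s<\epsilon$. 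The missing idea in your write-up is precisely this simultaneous (universal) choice of $\{H_m\}$ over a countable dense set, which is what makes the middle term go to zero without changing the polarizer sequence when you improve the approximation.
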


\begin{proof}

First, we recall that  there is a countable subset $V \subset C_0(N)$ that is dense in $L^s(N)$. Next, we choose a sequence $\{H_m\}$, for which (\ref{kappa}) holds for all $f \in V$. Then, we take any $f \in V$, sufficiently close to $u$, $||u-f||_s<\epsilon/3$. By contraction we have,

\[||u_m-u^*||_s\leq ||u_m-f_m||_s+||f_m-f^*||_s +||f^*-u^*||_s. \]

\noindent It remains to show that the right hand side is bounded by $\epsilon$. 

First, by non-expansiveness of the polarization, we have that $||u_m - f_m||_s \leq ||u-f||_s$. Second, by non-expansiveness of the Steiner symmetrization we have $||f^*-u^*||_s\leq ||f-u||_s$. Then, since $f\in V\subset C_{0+}(N)$,  choosing $m$ sufficiently large, we have  $||f_m-f^*||_s<\epsilon/3$, and then

\[||u_m-u^*||_s\leq ||u_m-f_m||_s+||f_m-f^*||_s +||f^*-u^*||_s< \epsilon,\]
\noindent as desired.
\end{proof}

\subsection{Proof of Theorem \ref{8.2}}


We are now in position to prove Theorem \ref{8.2} and conclude that $||\nabla u^*||_s \leq ||\nabla u||_s$. 

\begin{proof}
(of Theorem \ref{8.2})

Let $u \in {L_1^2}_+(N)$, and consider the sequence $\{u_m\}$ of polarizations of $u$, given by lemma \ref{convergence}. Then, for $1<s<\infty$, 
\[\lim_{m \rightarrow \infty} ||u_{m}-u^*||_s=0.\]

Also, 	$||\nabla u^H||_s=||\nabla u||_s$, by lemma \ref{gradlemma}. Then there exists some function $f \in L_1^s(N)$, and a subsequence $\{u_{m_k}\}$ of $\{u_m\}$, such that $f$ is the weak limit of $u_{m_k}$ in ${L_1^s}_+(N)$.

That is, for any compactly supported function $\varphi \in C_0(N)$,
\[\lim_{k\rightarrow \infty}\int_N \varphi \ \ div \ \ u_{m_k} dV_g = \int_N \varphi\ \ div f dV_g,\]
\noindent and
\[\lim_{k\rightarrow \infty}\int_N \varphi \ \ div \ \ u_{m_k} dV_g= -\lim_{k\rightarrow \infty}\int_N \ \ div \varphi\ \  u_{m_k} dV_g =-\int_N div \varphi \ \ div\ \ u^* dV_g.\]
\noindent Of course, this means that $v=u^*$. Finally, we recall that for $1<s<\infty$ the s-norm is weakly lower semicontinuous, that is, since $u_{m_k} \rightharpoonup u^*$ weakly 
in $L^s(N)$, then

\[||\nabla u^*||_s \leq \liminf_{k\rightarrow \infty}||\nabla u_{m_k}||_s, \]
\noindent hence 
\[||\nabla u^*||_s \leq ||\nabla u||_s, \]

\noindent since $||\nabla u^H||_s=||\nabla u||_s$ for any $H$ (lemma \ref{gradlemma}). 

\end{proof}

\section{Proof of Theorem \ref{principal}}
Let $(N,g)= (M^m\times \mathbf{R}^n,g_M+g_E)$, where $M^m$ is a closed manifold ($m\geq2$) with positive scalar curvature, and $g_E$ is the Euclidean metric. In this section we will prove the existence of a Yamabe minimizer for $(N,g)$. The basic scheme of the proof we give is the following. We first note that the subcritical Yamabe equation for $(N,g)$,

\begin{equation}
\label{subway}
a\Delta u+ S_g u^2=\lambda_s u^s, 
\end{equation}
\noindent where $S_g$ is the scalar curvature of $(N,g)$ and $a=\frac{4(n+m-1)}{n+m-2}$, can be solved for $s<p=\frac{2(n+m)}{n+m-2}$ by a positive $C^{\infty}$ function $u_s$. We achieve this by making use of the techniques of the Yamabe problem in the compact case (cf. in \cite{Lee}), and those of the Concentration Compactness Principle of Lions, (\cite{Lions1}, \cite{Lions2}). We then find a uniform bound in $L^r(N,g)$  (for some $r>p$) for the family of solution functions $\{u_s\}$, for $s$ sufficiently close to $p$. Then, using standard regularity theory and the Sobolev Embedding Theorem, we note that the  $\{u_s\}$ are $C^{2,\alpha}$ bounded in every compact subset $K_R=M\times B_R$ of $(N,g)$, and thus that $u_s \rightarrow u$ uniformly on every compact subset $K_R$ of $N$, by the Arzela-Ascoli Theorem. As a final step, we use again the techiniques of the Concentration Compactness Principle to prove that $u_s \rightarrow u$ uniformly on all of $N$, where $u$ is a positive and $C^{\infty}$ function that solves the Yamabe equation.

\subsection{The subcritical problem for $(N,g)$}

In this section we will prove that the equation

\begin{equation}
\label{two}
a\Delta u+ S_g u^2=\lambda_s u^s,
\end{equation}

\noindent has a positive smooth solution, $u_s$, for  $s<p$ and $s$ sufficiently close to $p$.
 
Let 

\begin{equation}
\label{a}
Q_s(\varphi)= \frac{\int_N{(a|\nabla \varphi|^2+S_g \varphi^2)dV_g}}{(\int_N{\varphi^s dV_g})^{2/s}},
\end{equation}

\noindent and
\begin{equation}
\label{b}
\lambda_s= \inf \{Q_s(\varphi) | \varphi \in C_0^{\infty}(N,g)\}. 
\end{equation}

Now, fix $s<p$, and choose a minimizing sequence $\{u_i\}$ of functions in $C_0^{\infty}(N)$, such that $Q_s(u_{i}) \rightarrow \lambda_s$, and such that $||u_i||_s=1$, $\forall i$. We remark that, by  Theorem \ref{8.2}, we can choose a minimizing sequence such that $u_i=u_i^*$.

Next, we note that 
\begin{equation}
\label{firstbound}
||u_i||_{1,2} \leq C_1,
\end{equation}

\noindent where $C_1$ is some constant, independent of $i$ and $s$. To prove  (\ref{firstbound}), we start with the following.

\begin{lemma}
\label{4.3}
Consider the set $\{\lambda_s\}$, $2\leq s\leq p$, with $\lambda_s$ as defined by equation (\ref{b}). Then, $\lambda_s$ is upper semi-continuous at $p$, as a function of $s$ (for any $\epsilon>0$, there is some $\delta$ such that $\lambda_s\leq \lambda_p+\epsilon$, for all $s \in (p-\delta,p)$). 
\end{lemma}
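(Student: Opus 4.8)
The plan is to fix a near-optimal test function for the critical exponent $p$ and then compare its Yamabe-type quotient $Q_s$ to $Q_p$ as $s \uparrow p$, showing the former does not exceed $\lambda_p + \epsilon$ for $s$ close enough to $p$. Concretely: given $\epsilon > 0$, choose $\varphi \in C_0^\infty(N)$ with $Q_p(\varphi) \le \lambda_p + \epsilon/2$; this exists by the definition (\ref{b}) of $\lambda_p$ as an infimum over $C_0^\infty(N)$. Since $\lambda_s \le Q_s(\varphi)$ for every $s$, it suffices to show $Q_s(\varphi) \to Q_p(\varphi)$ as $s \to p$, for this fixed $\varphi$. The numerator of $Q_s(\varphi)$, namely $\int_N (a|\nabla\varphi|^2 + S_g \varphi^2)\,dV_g$, does not depend on $s$ at all, so the only thing to control is the denominator $\left(\int_N \varphi^s\, dV_g\right)^{2/s}$.

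The key step is therefore the elementary continuity statement: for a fixed nonzero $\varphi \in C_0^\infty(N)$, the map $s \mapsto \int_N \varphi^s\, dV_g$ is continuous on $(1,\infty)$, and hence so is $s \mapsto \left(\int_N \varphi^s\, dV_g\right)^{2/s}$. This follows from dominated convergence: on the compact support $K = \mathrm{Supp}\,\varphi$ one has $0 \le \varphi \le \|\varphi\|_\infty$, so for $s$ in any bounded interval $[p-1, p+1]$ the integrand $\varphi^s$ is dominated by the fixed integrable function $\max\{1, \|\varphi\|_\infty^{p+1}\}\cdot \mathbf{1}_K$, while $\varphi(x)^s \to \varphi(x)^p$ pointwise as $s \to p$. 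One also needs $\int_N \varphi^p\, dV_g > 0$ so that the quotient behaves well in the limit, which holds since $\varphi \not\equiv 0$. Combining, $Q_s(\varphi) \to Q_p(\varphi) \le \lambda_p + \epsilon/2$, so there is $\delta > 0$ with $Q_s(\varphi) \le \lambda_p + \epsilon$ for all $s \in (p-\delta, p)$, and a fortiori $\lambda_s \le \lambda_p + \epsilon$ on that interval.

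I do not expect a genuine obstacle here; the statement is soft and the argument is a standard "fix a competitor, let the exponent vary" trick. The only mild point of care is making sure the test function $\varphi$ is chosen once and for all (independently of $s$) before taking $s \to p$, and that its support is compact so that dominated convergence applies cleanly without any decay-at-infinity issues; restricting attention to $s$ in a small punctured neighborhood of $p$ makes the domination uniform. (Note the lemma only asserts the one-sided bound $\lambda_s \le \lambda_p + \epsilon$, which is all that is needed for the subsequent uniform bound (\ref{firstbound}); the reverse inequality, i.e. lower semi-continuity, is not claimed and would in fact be the subtler direction because of possible loss of compactness, but it plays no role here.)
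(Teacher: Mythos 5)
Your proposal is correct and takes essentially the same approach as the paper: fix a near-optimal competitor $\varphi_0$ with $Q_p(\varphi_0) < \lambda_p + \epsilon$, use that the numerator of $Q_s(\varphi_0)$ is $s$-independent and that $s \mapsto \|\varphi_0\|_s$ is continuous, and conclude $\lambda_s \le Q_s(\varphi_0) \le \lambda_p + O(\epsilon)$ for $s$ near $p$. The only difference is that the paper simply invokes ``continuity of the norm'' while you supply the dominated-convergence justification on the compact support, which is a welcome bit of extra care but not a different argument.
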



\begin{proof}

Let $\varphi\in L_1^2(N)$. Given $s',s \leq p$, since

\[Q_s(\varphi) = \frac{a\int_N|\nabla \varphi|^2 dV_g+ \int_M S_g \varphi^2 dV_g}{||\varphi||_s^2},\]

\noindent then

\begin{equation}
\label{equal}
Q_s(\varphi)=Q_{s'}(\varphi)\frac{||\varphi||_{s'}^2}{||\varphi||_s^2}.
\end{equation}

\noindent Now, since $\lambda_p$ is an infimum,  given $\epsilon>0$ we may choose $\varphi_0$ such that 

\begin{equation}
\label{moto}
\lambda_p+\epsilon>Q_p(\varphi_0).
\end{equation}

\noindent On the other hand, by continuity of the norm, we have, for some $\delta>0$,

\[1-\epsilon\leq \frac{||\varphi_0||_p^2}{||\varphi_0||_s^2}\leq 1+\epsilon,\]
\noindent for  all $s \in (p-\delta,p+\delta)$. Hence,
\[\frac{||\varphi_0||_p^2}{||\varphi_0||_s^2} Q_p(\varphi_0) \leq Q_p(\varphi_0)(1+\epsilon),\]
\noindent for all $s \in (p-\delta,p)$. Then, taking into account equation (\ref{equal}), we have 

\[Q_s(\varphi_0)\leq Q_p(\varphi_0)(1+\epsilon),\]

\noindent and then, by (\ref{moto})
\[Q_s(\varphi_0)\leq Q_p(\varphi_0)(1+\epsilon)\leq (\lambda_p+\epsilon)(1+\epsilon). \]
\noindent Finally, since $\lambda_s<Q_s(\varphi_0)$, we have
\[\lambda_s < \lambda_p + C \epsilon+\epsilon^2.\]
\noindent for  all $s \in (p-\delta,p+\delta)$, with  $C=\lambda_p+1$.

\end{proof}


\begin {rem}
\label{AFP}
It is a recent result of Akutagawa, Florit and Petean (Theorem 1.3 in \cite{Aku})  that $\lambda_p=Y(M\times \mathbf{R}^n,g_M+g_E)<Y(S^{n+m},g_0)=Y_{m+n}$, when $M$ is closed, of positive scalar curvature and $m\geq2$. Since the inequality is strict, we may choose $\epsilon>0$ small enough so that $\lambda_p+\epsilon<c<Y_{n+m}$, for some $c \in \mathbf{R}$. 
It then follows from lemma (\ref{4.3}), that for some $\epsilon>0$ small enough, there is some $\delta$, such that

 \[\lambda_s\leq \lambda_p+\epsilon<Y_{n+m},\]
 
 \noindent  for every $s \in (p-\delta,p)$. That is 
 
 \begin{equation}
 \label{asterisco}
 \frac{\lambda_s}{Y_{n+m}}<1,
 \end{equation}
 \noindent for $s$ close enough to $p$.  
\end{rem}
 
We now go back to prove (\ref{firstbound}). We note that
 
\[ ||u_i||_{1,2}= \int_N{|\nabla u_i|^2}+ \int_N{ u_i^2} \leq \frac{\lambda_s+1}{a}+  \frac{\lambda_s+1}{\min_M \{S_g\}}\]
\[\leq \frac{Y_{n+m}+1}{a}+  \frac{Y_{n+m}+1}{\min_M \{S_g\}}= C_1, \]
\noindent for $s$ close enough to $p$, by (\ref{asterisco}). That is $\{u_i \}$ is $L_1^2$ bounded independently of $i$ and of $s$. 

It then follows from the Rellich-Kondrakov theorem (cf. in \cite{Lee}) that for every compact $K\subset N$, there is some subsequence $\{u_{i_k}\} \subset \{u_i\}$ that converges weakly in $L_1^2(K)$ and strongly in $L^s(K)$ to a function that we will denote by $u_s|_K$.

Consider now the compact subsets $K_R=M\times B_R \subset N$, and note that  since $K_R\subset K_{R'}$, for $R<R'$, then we have uniqueness of limits on each compact (because the convergence on $L^s(K_R)$ is strong for each $R$). Also, note that $N=\bigcup_i^{\infty} K_i$. Then, we have our limit function $u_s$, as a well defined function on all of  $N$ by taking $u_s=lim_{R\rightarrow \infty} u_s|_{K_R}$. 

Furthermore, on each $K_R$, by the weak convergence on $L_1^2(K_R)$ , we have 
\[{||\nabla u_s|_{K_R} ||_2^2}\leq \lim_{k\rightarrow \infty}\int_{K_R} \left\langle \nabla u_s|_{K_R},\nabla u_{i_k}\right\rangle dV_g, \] 
\noindent and this implies that
\[{||\nabla u_s|_{K_R} ||_2^2}\leq \limsup_{k\rightarrow \infty} {||(\nabla u_{i_k})|_{K_R} ||_2^2}.\]

On the other hand, by the strong convergence of $u_{i_k}$ to $u_{s}|_{K_R}$ in $L^s(K_R)$, and by H\"{o}lder's  inequality, we have

\[\int_{K_R} u_s|_{K_R}^2 dV_g= \lim_{k\rightarrow \infty}\int_{K_R} u_{i_k}^2 dV_g,\]
\noindent and so, it follows that
\begin{equation}
\label{Qs}
\int_{K_R}(a|\nabla u_{s}|_{K_R}|^2+S_g u_{s}|_{K_R}^2)dV_g\leq \limsup_{k\rightarrow \infty} \int_{K_R}(a|\nabla u_{i_k}|^2+S_g u_{i_k}^2)dV_g.
\end{equation}
 Hence, to prove that $u_s$ in fact minimizes $Q_s$ on $N$, it remains to show that $||u_s||_s=1$. To this purpose, we introduce in the following lemmas the techniques of the Concentration Compactness Principle, due to Lions \cite{Lions1}, \cite{Lions2}.

\begin{lemma}
\label{concentration}

Consider a sequence $\{\rho_k\}$ of $C^{\infty}$, non-negative functions, such that  $\rho_k=\rho_k^*$, and 

\[\int_N{\rho_k dV_g}=1.\]


\noindent Then, there exists a subsequence $\{\rho_{k_j}\} \subset \{\rho_{k}\}$, and some $\alpha$ ($0 \leq \alpha\leq 1$), such that the following is satisfied: for all $\epsilon>0$, there exists some $R_{\epsilon}$  ($0<R_{\epsilon}<\infty$), and some $j_0>1$ such that
\[ \int_{M\times B_{R_{\epsilon}}} \rho_{k_j}dVg \geq \alpha -\epsilon, \]
\noindent $\forall j>j_0$.

Furthermore, for each $R>0$, given $\epsilon>0$, there is some $j_1>1$ such that 
\[\int_{M\times B_R} \rho_{k_j}dVg \leq \alpha+\epsilon, \]
\noindent $\forall j>j_1$.

\end{lemma}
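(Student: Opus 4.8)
The plan is to run the standard concentration-compactness argument of Lions, specialized to this product setting, through the device of concentration functions. First I would introduce, for each $k$, the nondecreasing function $Q_k:[0,\infty)\to[0,1]$ defined by
\[Q_k(R)=\int_{M\times B_R}\rho_k\,dV_g.\]
Each $Q_k$ is nondecreasing in $R$; it is continuous, because $\rho_k\in L^1(N)$ and $M\times\partial B_R$ has $(m+n)$-dimensional measure zero, so $Q_k(R')-Q_k(R)=\int_{M\times(B_{R'}\setminus B_R)}\rho_k\,dV_g\to 0$ as $R'\to R$; and $\lim_{R\to\infty}Q_k(R)=\int_N\rho_k\,dV_g=1$ by hypothesis. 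It is worth noting here that the normalization $\rho_k=\rho_k^*$ is exactly what makes the balls $B_R$ centered at the fixed origin the natural sets to use: a Steiner-symmetrized function cannot let its mass drift off to infinity in a lateral $\mathbf{R}^n$-direction, so there is no need to allow moving centers in the concentration functions, as one must in the general Lions setup.

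Next I would apply Helly's selection theorem: $\{Q_k\}$ is a uniformly bounded sequence of monotone functions on $[0,\infty)$, hence it has a subsequence $\{Q_{k_j}\}$ converging pointwise on all of $[0,\infty)$ to a nondecreasing function $Q:[0,\infty)\to[0,1]$. I then set
\[\alpha=\lim_{R\to\infty}Q(R)=\sup_{R>0}Q(R)\in[0,1],\]
which exists since $Q$ is nondecreasing and bounded above by $1$. This $\alpha$, together with the subsequence $\{\rho_{k_j}\}$ (i.e.\ $\{\rho_{k_j}\}$ is the subsequence of $\{\rho_k\}$ indexed by the $k_j$ of Helly's theorem), will be the objects claimed in the statement.

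The two assertions then follow from pointwise convergence. For the first: given $\epsilon>0$, by definition of $\alpha$ there is a finite $R_\epsilon>0$ with $Q(R_\epsilon)>\alpha-\epsilon/2$; since $Q_{k_j}(R_\epsilon)\to Q(R_\epsilon)$, there is $j_0$ with $Q_{k_j}(R_\epsilon)>Q(R_\epsilon)-\epsilon/2>\alpha-\epsilon$ for all $j>j_0$, which is exactly $\int_{M\times B_{R_\epsilon}}\rho_{k_j}\,dV_g\geq\alpha-\epsilon$. For the second: fix $R>0$ and $\epsilon>0$; since $Q(R)\leq\alpha$ and $Q_{k_j}(R)\to Q(R)$, there is $j_1$ with $Q_{k_j}(R)<Q(R)+\epsilon\leq\alpha+\epsilon$ for all $j>j_1$, i.e.\ $\int_{M\times B_R}\rho_{k_j}\,dV_g\leq\alpha+\epsilon$.

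The argument is essentially soft; the only points needing a little care are that Helly's theorem is invoked so as to produce convergence at every $R$ simultaneously (so that one and the same subsequence serves all choices of $R$ in both assertions), and that $R_\epsilon$ can be taken finite, which is guaranteed precisely because $\alpha$ is defined as the limit of $Q(R)$ as $R\to\infty$. The genuinely hard part of the concentration-compactness method in this paper — namely excluding the dichotomy case $0<\alpha<1$ and the vanishing case $\alpha=0$, and thereby forcing $\alpha=1$ for the minimizing-type sequences $\{\rho_k\}$ built from the $u_i$'s — is not part of this lemma; it is carried out afterward, using the Sobolev inequality together with the strict inequality $\lambda_s<Y_{m+n}$ recorded in Remark \ref{AFP}, and it is there that the real content lies.
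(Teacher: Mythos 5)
Your proof is correct and follows essentially the same route as the paper: introduce the concentration functions $Q_k(R)=\int_{M\times B_R}\rho_k\,dV_g$, extract a pointwise-convergent subsequence by a monotone-selection argument, set $\alpha=\lim_{R\to\infty}Q(R)$, and read off the two inequalities from pointwise convergence and monotonicity. The only difference is cosmetic: the paper invokes ``the Heine-Borel theorem'' for the subsequence extraction, whereas what is actually used (and what you correctly name) is Helly's selection theorem for uniformly bounded monotone functions.
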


\proof

First note that since $\rho_k=\rho_k^*$ for each $k>1$, then, for each $R$ we have 
\[\sup_{y\in \mathbf{R}^n} \int_{M\times \{y+ B_R\}} \rho_k dV_g = \int_{M\times B_R} \rho_k dV_g,\]

\noindent where $B_R$ is the ball of radius $R$ centered at $0$, and $y+B_R$ the ball of radius $R$ centered at $y$. Now, consider the functions
\[Q_k(t)= \int_{M\times B_t} \rho_k dV_g. \]

\noindent It follows that for each $k$, $0\leq Q_k(t)\leq 1$. Thus, the functions $Q_k(t)$ are non-negative and uniformly bounded in $\mathbf{R}^+$. Furthermore, since the $\rho_k$ are non-negative, the functions $Q_k(t)$ are non-decreasing as fucntions of $t$. 

\noindent It follows then, from the Heine-Borel theorem, that there is a subsequence $\{Q_{k_j}\} \subset \{Q_{k}\}$, and a non-negative function $Q(t)$, such that  

\[\lim_{j\rightarrow \infty}Q_{k_j} = Q(t),\]
\noindent for each $t\geq 0$.

Now, let $\lim_{t \rightarrow \infty} Q(t)=\alpha$.  We note that, $0\leq \alpha \leq 1$. Also, since $Q(t)$ is non-decreasing, and $\lim_{t\rightarrow\infty} Q(t)= \alpha$, then, given $\epsilon>0$, we may choose some $t_{\epsilon}$ such that $Q(t_{\epsilon})>\alpha-\epsilon$. Of course this implies that 

\begin{equation}
\label{diko}
\int_{M\times B_{t_{\epsilon}}}\rho_{k_j} dV_g \geq \alpha-\epsilon,
\end{equation}
\noindent for all $j>j_0$, for $j_0$ large enough. Moreover, since $Q(t)$ is non-decreasing, for all $t>0$ we have $Q(t)\leq \alpha$. This implies that

\begin{equation}
\label{diko2}
\int_{M\times B_t}\rho_{k_j} dV_g \leq \alpha+\epsilon,
\end{equation}
\noindent for all $j>j_1$, for $j_1$ large enough.

\endproof

We now show that given $\beta \in (2,p)$ the $u_k^{\beta}$ ``concentrate'' in a compact set.

\begin{lemma}
\label{novnod}

Consider a sequence $\{u_{k}^{b_k}\}$ of $C^{\infty}$, non-negative functions  ($b_k>2, \forall k$),  such that  $u_k=u_k^*$, and

\[\int_N{u_{k}^{b_k} dV_g}=1,\]
for each $k$. Assume also that the sequence $\{u_{k}\}$ is bounded in $L_1^2(N)$.


Then,  there exists a subsequence $\{u_{k_j}\} \subset \{u_{k}\}$, such that for each $\beta$ ($\beta \in (2,p)$), we have that given $\epsilon>0$, there exists some $R_{\epsilon}$ ($0<R_{\epsilon}<\infty$), such that
\[\int_{N\setminus(M\times B_{R_{\epsilon}})}{u_{k_j}^{\beta}} dVg\leq \epsilon, \]
\noindent $\forall j>j_0$, for some $j_0>1$. 
\end{lemma}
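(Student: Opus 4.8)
The plan is to establish the "tightness" of the sequence $\{u_{k_j}^{b_k}\}$ as a consequence of the concentration dichotomy obtained in Lemma \ref{concentration}, using the uniform $L_1^2$ bound to rule out the possibility that mass escapes to infinity. First I would apply Lemma \ref{concentration} to the normalized densities $\rho_k := u_k^{b_k}$ (which are nonnegative, $C^\infty$, Steiner-symmetric since $u_k = u_k^*$, and have unit integral), extracting a subsequence $\{u_{k_j}\}$ and a concentration parameter $\alpha \in [0,1]$ so that the mass of $\rho_{k_j}$ on $M \times B_R$ converges, as $j \to \infty$, to something between $\alpha - \epsilon$ and $\alpha + \epsilon$ for $R$ large. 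The crux is then to show $\alpha = 1$: if $\alpha < 1$ a nontrivial portion of the $b_k$-mass lives outside every fixed cylinder $M \times B_R$, and I would derive a contradiction with the $L_1^2$ boundedness via a Sobolev/interpolation estimate on the annular regions $M \times (B_{2R} \setminus B_R)$, where one uses a cutoff function to localize and the Sobolev embedding $L_1^2 \hookrightarrow L^p$ on $N$ (valid because $\dim N = m+n$ and $p = \frac{2(m+n)}{m+n-2}$) together with the interpolation between $L^2$ and $L^p$ that controls $L^\beta$ for $2 < \beta < p$; the point is that the $L_1^2$ norm on such a shell goes to zero as $R \to \infty$ uniformly in $j$ (since the total $L_1^2$ norm is bounded, the tails are small), forcing the $L^\beta$ mass on the shell, and hence outside $M \times B_R$, to be small.

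The key steps in order: (1) set $\rho_{k_j} = u_{k_j}^{b_k}$ and invoke Lemma \ref{concentration} to get $\alpha$ and the two-sided mass control on cylinders; (2) observe that it suffices to bound $\int_{N \setminus (M\times B_{R_\epsilon})} u_{k_j}^\beta$, and split $N \setminus (M \times B_R) = \bigcup_{l \geq 0} M \times (B_{2^{l+1}R} \setminus B_{2^l R})$, or more simply estimate directly on $N \setminus (M \times B_R)$; (3) pick a smooth cutoff $\eta_R$ vanishing on $M \times B_R$ and equal to $1$ on $N \setminus (M \times B_{2R})$ with $|\nabla \eta_R| \leq C/R$, and apply the Sobolev inequality on $N$ to $\eta_R u_{k_j}$ to get $\|\eta_R u_{k_j}\|_p^2 \leq C(\|\nabla u_{k_j}\|_{L^2(N \setminus M\times B_R)}^2 + R^{-2}\|u_{k_j}\|_{L^2(N\setminus M\times B_R)}^2 + \text{lower order})$; (4) use the uniform $L_1^2$ bound from the hypothesis to make the right side smaller than any prescribed quantity by taking $R$ large (the tails of a fixed-norm function are small); (5) interpolate: $\|u_{k_j}\|_{L^\beta(N\setminus M\times B_R)} \leq \|u_{k_j}\|_{L^2}^{1-\theta}\|u_{k_j}\|_{L^p(N\setminus M\times B_R)}^\theta$ for the appropriate $\theta \in (0,1)$, and conclude the desired smallness.

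The main obstacle I anticipate is step (4)--(5): making the tail of the $L_1^2$ norm genuinely small uniformly in $j$. The bound $\|u_{k_j}\|_{1,2} \leq C_1$ is a bound on the whole norm, but $a$ $priori$ it does not by itself say the $L^2$ and gradient mass on $N \setminus (M \times B_R)$ is small as $R \to \infty$ uniformly in $j$ — that tightness is precisely what the Steiner symmetry $u_k = u_k^*$ must provide, via the fact that the profile of each $u_k$ is radially decreasing in the $\mathbf{R}^n$ direction and the mass budget is fixed. So I would lean on the monotonicity in Lemma \ref{concentration}'s $Q_k(t)$ and on the observation that for a symmetric function the $L^2$-tail outside $M \times B_R$ is controlled by the "non-concentrated" part $1 - \alpha_2$ of its $L^2$ mass, which, combined with the unit $L^{b_k}$ normalization and the subcritical exponent $\beta < p$, cannot all escape; essentially one runs the concentration dichotomy simultaneously for the $L^2$-density and the $L^{b_k}$-density and shows their concentration radii are compatible. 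This simultaneous-dichotomy bookkeeping is the delicate part; everything else is routine cutoff-and-Sobolev estimation.
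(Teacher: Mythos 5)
Your diagnosis of your own obstacle is exactly right, and it is fatal to the cutoff--plus--global--Sobolev scheme as you describe it: the hypothesis gives $\|u_{k_j}\|_{1,2}\le C_1$ as a bound on the \emph{whole} $L_1^2$ norm, and there is no mechanism in the hypotheses forcing $\|\nabla u_{k_j}\|_{L^2(N\setminus(M\times B_R))}$ to tend to $0$ as $R\to\infty$ uniformly in $j$. Your follow-up suggestion --- running a second concentration dichotomy for the $L^2$-density and matching concentration radii --- is a plausible direction but is not actually carried out, and it is not a minor detail: making it rigorous would essentially require establishing $L^2$-tightness before the $L^\beta$-tightness you are trying to prove, which is close to circular. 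You also implicitly aim to show that the concentration parameter $\alpha$ of Lemma \ref{concentration} equals $1$; the lemma does not need this and does not assert it ($\alpha=1$ is deduced only afterward, by applying the lemma with $\beta=b_k=s$).

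The paper sidesteps the obstacle by never requiring a small $L_1^2$-tail. From Lemma \ref{concentration} it first extracts, for every compact $K\subset N\setminus(M\times B_{R_\epsilon})$, the bound $\int_K u_j^{b_j}\,dV_g<\epsilon$. It then fixes a small radius $R_0$ with $\mathrm{Vol}(M\times B_{R_0})\le 1$ and, since $b_j>2$, H\"older's inequality gives $\int_{M\times\{y+B_{R_0}\}}u_j^2\,dV_g<\epsilon$ for every center $y$ outside $B_{R_\epsilon}$. This local $L^2$-smallness on \emph{fixed-size} balls is the real input. The paper then applies a local $W^{1,1}\hookrightarrow L^{\gamma}$ Sobolev inequality to $u_j^{\bar\beta}$ on each such ball (with a constant uniform in $y$) and sums over a bounded-multiplicity cover of $\mathbf{R}^n\setminus B_{R_1}$ by balls of radius $R_0$. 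The structural point is that a factor $\epsilon^{(\gamma-1)/2}$ comes out in front of the summed estimate, while the global $L_1^2$ bound $A_{1,2}$ enters only as a fixed multiplicative constant --- so the $L_1^2$ norm need not have a small tail, only be finite. Finally, the power counting requires an $L^\infty$ bound, which the paper obtains by a truncation argument: replace $u_j$ by $\min\{u_j,A\}$ and control the excess mass on the set where $u_j>A$ by $K/A^{\beta_2-\beta_1}$, using a higher subcritical exponent $\beta_2\in(\beta_1,p)$. This truncation step is also missing from your plan.
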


\begin{proof} 

 Take $\rho_k=u_k^{b_k}$. Then, by Lemma \ref{concentration}, we have a subsequence $\{u_{k_j}^{b_{k_j}}\}$ of $\{u_k^{b_k}\}$, and an $\alpha$, $0\leq\alpha\leq 1$,  such that, for every $\epsilon/2>0$, there is some $R_{\epsilon/2}$, such that 
\begin{equation}
\label{ok}
\alpha-\epsilon/2 < \int_{M\times B_{R_{\epsilon/2}}}u_{j}^{b_{j}} dV_g < \alpha+\epsilon/2,
\end{equation}

\noindent for all $j>j_0$, for some $j_0$ (for simplicity, we will denote $u_{k_j}^{b_{k_j}}$ by $u_{j}^{b_{j}}$).

Also, since for every $R>0$ we have $\int_{M\times B_R} \rho_{k_j} dVg \leq \alpha+\epsilon/2$ (for $j>j_1$, $j_1$ large enough), then, it follows from (\ref{ok}) that for every  for every compact $K$, $K\subset N \setminus  M\times B_{R_{\epsilon}}$, we have

\[\int_{K}u_j^{b_j} dV_g <\epsilon,\]
for all $j>j_1$.

Now, we choose $R_0>0$ such that $Vol(M\times B_{R_{0}})\leq 1$. Then, by  H\"{o}lder's inequality, for any $y\in B_{R_\epsilon}^c$

\[ \int_{M\times \{y + B_{R_0}\}} u_j^2 dV_g \leq   \int_{M\times \{y + B_{R_0}\}} u_j^{b_j} dV_g < \epsilon.\]
\noindent Now, let $R_1=R_{\epsilon}+2R_0$, then,
\begin{equation}	
			\label{Lio}		
\sup_{\{y\in B_{R_\epsilon}^c} \int_{M\times \{y + B_{R_0}\}} u_j^2 dV_g <\epsilon.
\end{equation}
\noindent Of course, we can make $\epsilon \rightarrow 0$ by making $R_{\epsilon}$ (and thus $R_1$) go to infinity.

 We next divide the proof in cases.

\textbf{Case 1}.  The sequence $\{u_{k}\}$ is bounded in $L^{\infty}(N)$.

Let $||u_k||_{\infty} <A_{\infty}$. Also, since $u_k$ is bounded in $L^2(N)$, let $A_{1,2}$ ($1<A_{1,2}<\infty)$ be such that $||u_k||_{1,2}<A_{1,2}$. Then, we have, for all $\beta_0>2$, given any $y\in B_{R_\epsilon}^c$

\[ \int_{M\times \{y + B_{R_0}\}} u_j^{\beta_0} dV_g \leq A_{\infty}^{\beta_0-2}  \int_{M\times \{y + B_{R_0}\}} u_j^2 dV_g \]

\begin {equation}
\label{astro}
<A_{\infty}^{\beta_0-2} \epsilon.
\end{equation}

\noindent by (\ref{Lio}). Now, take $\bar\beta$, such that $\bar\beta>2$. Of course, $2<2(\bar\beta-1)<\infty$. Then, by H\"{o}lder's inequality, for any given $y\in B_{R_\epsilon}^c$

\[ \int_{M\times \{y + B_{R_0}\}} |u_j|^{\bar\beta-1}|\nabla u| dV_g \]
\[\leq  \left(\int_{M\times \{y + B_{R_0}\}} |u_j|^{2(\bar\beta-1)}dV_g\right)^{\frac{1}{2}} 
\left(\int_{M\times \{y + B_{R_0}\}} |\nabla u|^2 dV_g \right)^{\frac{1}{2}} \]

\begin{equation}
\label{Lio3}
\leq (A_{\infty}^{(\bar\beta-2)}\epsilon^{1/2})(A_{1,2}),
\end{equation}

\noindent where the last inequality follows from (\ref{astro}) and the fact that $\int_{N} |\nabla u|^2 dV_g$ is uniformly bounded by $A_{1,2}$.
Then, by the Sobolev imbedding, for any $\gamma \in (1,\frac{m+n}{m+n-1})$, there is a constant $c_0$, independent of $y$, such that  

\[	\left(	\int_{M\times \{y + B_{R_0}\}}{u_j^{\beta \gamma}dV_g}\right)^{1/\gamma} \leq c_o \int_{M\times \{y + B_{R_0}\}} {u_j^{\beta }+|\nabla (u_j)^{\beta }| dV_g}\]
\[ 	\leq c_o  \int_{M\times \{y + B_{R_0}\}} {u_j^{\beta }+\beta (u_j)^{\beta -1} |\nabla u_j| dV_g},\]

\noindent for any $y\in B_{R_\epsilon}^c$. That is, 
\[\int_{M\times \{y + B_{R_0}\}}{u_i^{\bar\beta \gamma}dV_g}	\leq C_o \left(\int_{M\times \{y + B_{R_0}\}} {u_j^{\bar\beta }+\bar\beta (u_j)^{\bar\beta -1} |\nabla u_j| dV_g}\right)^{\gamma}\]

\[	\leq C_o\left(A_{\infty}^{\bar\beta-2}\epsilon+\bar\beta A_{1,2}(A_{\infty}^{(\bar\beta-2)}\epsilon^{1/2})\right)^{\gamma-1} \left(\int_{M\times \{y + B_{R_0}\}} {u_j^{\bar\beta }+\bar\beta (u_j)^{\bar\beta -1} |\nabla u_j| dV_g}\right)\]

\[\leq C_1 \epsilon^{(\gamma-1)/2} \left(\int_{M\times \{y + B_{R_0}\}} {u_j^{\bar\beta }+\bar\beta (u_j)^{\bar\beta -1} |\nabla u_j| dV_g}\right),\]

\noindent with $C_1=C_0 A_{\infty}^{(\bar{\beta}-2)(\gamma-1)}(\bar{\beta}A_{1,2})^{\gamma-1}$, and $C_0=c_0^{\gamma}$.

We then cover $\mathbf{R}^n\setminus B_{R_1}$ with balls of radius $R_0$ in some way that any point $y \in (\mathbf{R}^n\setminus B_{R_1}) $ is not covered by more than $m$ balls ($m$ a prescribed integer). It follows that

\[\int_{N\setminus M\times B_{R_1}}{u_j^{\bar\beta \gamma}dV_g} \leq m  C_1 \epsilon^{(\gamma-1)/2}  \int_{N \setminus (M\times  B_{R_1})} {u_j^{\bar\beta }+\bar\beta (u_j)^{\bar\beta -1} |\nabla u_j| dV_g}\]
\[\leq m C_1 \epsilon^{(\gamma-1)/2}  (A_{\infty}^{\bar\beta-2}A_{1,2}+\bar\beta A_{1,2} A_{\infty}^{\bar\beta-2} A_{1,2}^{1/2}) \]
\begin{equation}
\label{gamma}
\leq (m C_0 p A_{1,2}^2 A_{\infty}^2 )\epsilon^{(\gamma-1)/2}
\end{equation}
\[\leq C_2 \epsilon^{(\gamma-1)/2}.\]

Finally, by noting that $C_2$ does not depend on $y\in \mathbf{R}^n$, we can make $\epsilon\rightarrow 0$, by making $R_{\epsilon}$ (and thus $R_1$) go to infinity. That is, given $\beta \in (2,p)$, for every $\delta>0$ we may find $R_{\delta}$ such that

\[	\int_{N\setminus M\times B_{R_{\delta}}}{u_j^{\beta }dV_g} < \delta. \]


This finishes the proof of case 1. We now remove the assumption that $u_j$ is bounded in $L^{\infty}(N)$.

\textbf{Case 2}. The sequence $\{u_j\}$ is not bounded in $L^{\infty}(N)$.

We note that for any $A>1$, the function $v_j=\min\{u_j, A\}$ is bounded in $L^{\infty}(N)$, and still satisfies the conditions needed for the previous proof, so that for any $\beta_1$ ($2<\beta_1<p$), given $\epsilon>0$, we have, by equation (\ref{gamma}), some $R_{1}>0$ such that  
\begin{equation}
\label{bone}
\int_{N\setminus M\times B_{R_1}} v_j^{\beta_1} < C_3 A^2 \epsilon.
\end{equation}

\noindent where $C_3$ is a constant that does not depend on $A$. We also have
\begin{equation}
\label{btwo}
\int_{N\setminus M\times B_{R_1}} u_j^{\beta_1} dV_g \leq \int_{N\setminus M\times B_{R_1}} v_j^{\beta_1} dV_g+ \int_{N\setminus M\times B_{R_1}} {(u_j|_{\{u_j>A\}})^{\beta_1} dV_g}.
\end{equation}

We next choose $\beta_2 \in(2,p)$, such that $\beta_2 >\beta_1$. And then,
\[ {A^{\beta_2-\beta_1}} \int_{N\setminus M\times B_{R_1}} (u_j|_{\{u_j>A\}})^{\beta_1} dV_g \leq \int_{N\setminus M\times B_{R_1}} (u_j|_{\{u_j>A\}})^{\beta_2} dV_g,\]

\noindent it follows that

\begin{equation}
\label{bthree}
\int_{N\setminus M\times B_{R_1}} (u_j|_{\{u_j>A\}})^{\beta_1} dV_g \leq \frac{K}{A^{\beta_2-\beta_1}},
\end{equation}

\noindent since $\int_N u_j^{\beta_1} dV_g<K$, for some $K>0$, because $\beta_1<p$. Hence, from (\ref{bone}), (\ref{btwo}) and (\ref{bthree}), we have
 
\begin{equation}
\label{bfour}
\int_{N\setminus M\times B_{R_1}} u_j^{\beta_1}<C_3 A^2 \epsilon + \frac{K}{A^{\beta_2-\beta_1}}.
\end{equation}

Then, given $\delta>0$, we may first choose  $A$ such that $\frac{K}{A^{\beta_2-\beta_1}}<\frac{\delta}{2}$, and then we choose $\epsilon>0$, such that $C_3 A^2 \epsilon<\frac{\delta}{2}$. Of course, for this $\epsilon$ there is some $R_1$ such that $\int_{N\setminus M\times B_{R_1}} v_j^{\beta_1} < C_3 A^2 \epsilon$, and then by equation (\ref{bfour}) we have
\begin{equation}
\label{bfive}
\int_{N\setminus M\times B_{R_1}} u_j^{\beta_1}<\delta.
\end{equation}

\noindent The conclusion of the lemma follows.

\end{proof}

We now go back to prove that $||u_s||_s=1$. By taking $b_k=s$, we note that the minimizing sequence $\{u_{k}\}$ satisfies the hypothesis of lemma  \ref{novnod}, since in its construction we assumed that the minimizing sequence was symmetrized ($u_{k}=u_{k}^*$), and that $||u_{k}||_{s}=1$, for all $k>1$. On the other hand, equation (\ref{firstbound}) showed that  $\{u_{k}\}$ was uniformly bounded in $L_1^2(N)$. Then, by taking $\beta=s<p$ in lemma  \ref{novnod}, we have that for every $\delta>0$ there is some $R_1$ such that

\[	\int_{N\setminus M\times B_{R_1}}{u_j^{s}dV_g} < \delta. \]

Of course, this implies that $\alpha=1$. That is, $||u_s||_s=1$. Then, $u_s$ is a weak solution to equation (\ref{two}). It follows from a result of N. Trudinger (Theorem 3 in \cite{Tru}) that $u_s$ is smooth, since it is a weak solution of (\ref{subway}), and from the maximum principle (cf. in \cite{Lee}) that $u_s$ is positive, since $S_g$ is positive. 

We resume in the next lemma what we have just proved.
\begin{lemma}
\label{subcritical}
For $s>2$ and close enough to $p$ (close enough so that equation (\ref{asterisco}) is satisfied),  equation (\ref{two}) has a solution $u_s$, such that $Q_s(u_s)=\lambda_s$, and $||u_s||_s=1$.
\end{lemma}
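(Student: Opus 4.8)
The plan is to assemble the pieces developed through this subsection into a single argument, treating the lemma as a summary of what the preceding paragraphs establish. First I would fix $s<p$ close enough to $p$ that the estimate (\ref{asterisco}), $\lambda_s/Y_{n+m}<1$, holds; this is possible by Lemma \ref{4.3} combined with Remark \ref{AFP}. I would then choose a minimizing sequence $\{u_i\}\subset C_0^\infty(N)$ with $\|u_i\|_s=1$ and $Q_s(u_i)\to\lambda_s$, and, invoking Theorem \ref{8.2} and Corollary \ref{radial}, replace each $u_i$ by its Steiner symmetrization $u_i^*$; this does not increase $Q_s$ and preserves the normalization $\|u_i\|_s=1$, so we may assume $u_i=u_i^*$. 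The uniform bound $\|u_i\|_{1,2}\le C_1$ of (\ref{firstbound}) then follows directly from $Q_s(u_i)\le\lambda_s+1$, from $a\int_N|\nabla u_i|^2\le Q_s(u_i)$ and $(\min_M S_g)\int_N u_i^2\le Q_s(u_i)$, and from (\ref{asterisco}), which lets us replace $\lambda_s$ by the $s$-independent constant $Y_{n+m}$.

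With this bound in hand, the Rellich--Kondrakov theorem applied on the compact exhaustion $K_R=M\times B_R$ yields a subsequence converging weakly in $L_1^2(K_R)$ and strongly in $L^s(K_R)$ for every $R$; consistency of these limits defines a function $u_s$ on all of $N$, and weak lower semicontinuity of the Dirichlet energy together with strong $L^s$ convergence on each $K_R$ gives the inequality (\ref{Qs}), hence $Q_s(u_s)\le\lambda_s$ — provided $\|u_s\|_s=1$. Establishing this last equality, i.e. that no $L^s$-mass escapes to infinity, is the step I expect to be the main obstacle, and it is here that the symmetrization pays off. I would apply the concentration--compactness Lemmas \ref{concentration} and \ref{novnod} with $b_k=s$: the hypotheses hold because $u_k=u_k^*$, $\|u_k\|_s=1$, and $\{u_k\}$ is bounded in $L_1^2(N)$. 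Lemma \ref{novnod} with $\beta=s$ produces, for each $\delta>0$, a radius $R_\delta$ with $\int_{N\setminus M\times B_{R_\delta}}u_j^s\,dV_g<\delta$ for $j$ large, which forces the concentration constant $\alpha$ of Lemma \ref{concentration} to equal $1$; combined with the strong $L^s$ convergence on each $K_R$ this yields $\|u_s\|_s=1$. It is precisely the radial symmetry that excludes the vanishing and splitting alternatives of the dichotomy.

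Finally, from $\|u_s\|_s=1$ and $Q_s(u_s)\le\lambda_s$ we conclude $Q_s(u_s)=\lambda_s$, so $u_s$ minimizes $Q_s$ over $C_0^\infty(N)$ and is therefore a weak solution of (\ref{two}). Smoothness of $u_s$ then follows from Trudinger's regularity result (Theorem 3 in \cite{Tru}), and positivity from the maximum principle, using $S_g>0$ on the compact factor $M$. This gives the lemma.
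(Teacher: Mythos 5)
Your proposal reconstructs the paper's argument step by step: symmetrize a minimizing sequence, derive the uniform $L_1^2$ bound from the strict inequality $\lambda_s < Y_{n+m}$ (Lemma \ref{4.3} and Remark \ref{AFP}), pass to a weak/strong limit on the exhaustion $K_R$, and rule out mass escaping to infinity via Lemmas \ref{concentration} and \ref{novnod} with $b_k=s$, then finish with Trudinger regularity and the maximum principle. This is essentially the same route as the paper's own proof, and the reasoning is sound.
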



\subsection{The limit as $s\rightarrow p$}

We now investigate the limit of the functions $u_s$, as $s\rightarrow p$. We will show that the functions $u_s$ converge to a function $u$, which in turn will be the Yamabe minimizer for $(N,g)$. We will also show that $u$ is positive and $C^{\infty}$.



By lemma \ref{subcritical}, we have a family $\{u_s\}$  of functions that solve equation (\ref{two}) and such that $||u_s||_s=1$. Next, we will  prove that this family is uniformly $C^{2,\alpha}$ bounded in each compact set $M\times B_R\subset N$. We will achieve this by finding first a uniform bound for $||u_s||_r$, (for some $r>p$) and then, using standard elliptic  regularity theory, and the Sobolev Embedding Theorem, we will find our $C^{2,\alpha}$ bound. We follow the techniques of Parker and Lee, \cite{Lee}. 


We begin by proving that the functions $||u_s||$ are uniformly bounded in $L^r(N)$, for some $r>p$, as $s\rightarrow p$. 
\begin{prop}
\label{4.4}
Given the collection of functions of lemma (\ref{subcritical}), $\{u_s\}$, there are some constants $s_0<p$, $r>p$, and $C>0$, such that 
\[||u_s||_r \leq C,\] 
\noindent for all $s>s_0.$
\end{prop}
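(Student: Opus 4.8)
The plan is to upgrade the integrability of the subcritical solutions $u_s$ beyond the critical exponent $p$, uniformly in $s$, by running a Moser-type iteration (the De Giorgi--Nash--Moser / Trudinger bootstrap used in the compact Yamabe problem, as in \cite{Lee}) while keeping careful track of how the constants depend on $s$ and on the Sobolev constant of $(N,g)$. The starting point is the subcritical equation $a\Delta u_s+S_g u_s^2=\lambda_s u_s^s$ together with the facts already established: $\|u_s\|_s=1$, $\lambda_s\le \lambda_p+\epsilon<Y_{n+m}$ for $s$ near $p$ (Remark \ref{AFP}), $\|u_s\|_{1,2}\le C_1$ uniformly, and that $u_s$ is smooth and positive. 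The non-compactness is harmless here because the Sobolev inequality $\|\varphi\|_p^2\le K\big(\|\nabla\varphi\|_2^2+\|\varphi\|_2^2\big)$ still holds on $(N,g)=(M^m\times\mathbf R^n,g_M+g_E)$ with a uniform constant $K$, since the manifold has bounded geometry (it is a Riemannian product of a closed manifold with Euclidean space).

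The key steps, in order, are as follows. First I would fix $L>0$ large and, for the truncation $u_{s,L}=\min\{u_s,L\}$, test the equation against $\varphi=u_s\,u_{s,L}^{2(\beta-1)}$ for an exponent $\beta>1$ to be chosen; integrating by parts and using $|\nabla(u_s u_{s,L}^{\beta-1})|^2\le C\beta^2 u_{s,L}^{2(\beta-1)}|\nabla u_s|^2$ on the set where $u_s\le L$ gives, after discarding the favorable sign of the $S_g u_s^2$ term (since $S_g>0$),
\begin{equation}
\label{moseriter}
a\,C\beta^{-2}\big\|\nabla\big(u_s u_{s,L}^{\beta-1}\big)\big\|_2^2\le \lambda_s\int_N u_s^{s-2}\big(u_s u_{s,L}^{\beta-1}\big)^2\,dV_g + \int_N S_g\, u_s^2\, u_{s,L}^{2(\beta-1)}\,dV_g.
\end{equation}
Second, I would estimate the right-hand side of \eqref{moseriter}: split the integral $\int u_s^{s-2}(u_s u_{s,L}^{\beta-1})^2$ over the region where $u_s^{s-2}$ is small and the region $\{u_s>A\}$ where it is large; on the latter, Hölder's inequality together with $\|u_s\|_s=1$ makes the contribution arbitrarily small by choosing $A$ large, while the term $\int S_g u_s^2 u_{s,L}^{2(\beta-1)}$ is controlled by $\max_M S_g$ times $\|u_s u_{s,L}^{\beta-1}\|_2^2$, which in turn is absorbed by the Sobolev inequality for $\beta$ close to the first iteration value. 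This is exactly the mechanism that in the compact case lets one start the iteration: the smallness of $\lambda_s/Y_{n+m}$ (equivalently \eqref{asterisco}) guarantees the critical-looking term can be absorbed into the left side. Third, apply the Sobolev inequality to the left side of \eqref{moseriter}, let $L\to\infty$ by monotone convergence, and conclude $\|u_s\|_{p\beta}\le C(\beta)$ for the first admissible $\beta>1$; since $p\beta>p$, taking $r=p\beta$ and $s_0<p$ close enough to $p$ finishes the proof with a constant $C$ uniform in $s>s_0$.

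The main obstacle I anticipate is the absorption step: one must ensure that the "bad" term $\lambda_s\int u_s^{s-2}(u_s u_{s,L}^{\beta-1})^2\,dV_g$ on the right of \eqref{moseriter} genuinely has a coefficient strictly smaller than the Sobolev constant times the left-hand coefficient, uniformly in $s$ near $p$. This requires combining three ingredients simultaneously: (i) the strict inequality $\lambda_s<Y_{n+m}$ from Remark \ref{AFP}, which controls the size of the dangerous term against the Sobolev quotient; (ii) Hölder on $\{u_s>A\}$ using $\|u_s\|_s=1$ to make the tail small independently of $s$; and (iii) choosing $\beta>1$ but close enough to $1$ that no extra loss appears from the truncation exponents. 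A secondary technical point is verifying that $\varphi=u_s u_{s,L}^{2(\beta-1)}$ is an admissible test function and that all integrations by parts are justified — this is routine given that $u_s$ is smooth, positive, and bounded in $L_1^2$, together with the truncation. Once the first step of the iteration is in hand, a single application suffices for the statement (no need to iterate to $L^\infty$), so I would stop there.
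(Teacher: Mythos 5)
Your approach is essentially the paper's: a single Trudinger/Moser bootstrap step, testing the subcritical equation against a power of $u_s$, applying the Sobolev inequality on $(N,g)$ (valid with uniform constant by bounded geometry), and absorbing the borderline term on the right-hand side into the left using the strict inequality $\lambda_s<Y_{n+m}$ from Remark~\ref{AFP} together with $\|u_s\|_s=1$. The paper multiplies by $u_s^{1+2\delta}$ directly rather than truncating, and controls $\int u_s^{s-2}w^2$ by the single Hölder step $\int u_s^{s-2}w^2\le\|u_s\|_{(s-2)(n+m)/2}^{s-2}\|w\|_p^2$ followed by continuity of the norm rather than your tail split over $\{u_s>A\}$; these amount to the same bookkeeping.

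There is one genuine gap you should close. After absorption you are left with $\|w\|_p^2\le C\|w\|_2^2$, with $w=u_s^{\beta}$, and you still need $\|w\|_2^2=\|u_s\|_{2\beta}^{2\beta}$ bounded uniformly in $s$; this is \emph{not} ``absorbed by the Sobolev inequality'' as you write. Since $\beta>1$, it does not follow directly from the uniform $L^2$ bound (\ref{firstbound}) either. You need to combine that bound with $\|u_s\|_s=1$. The paper does this by splitting $N$ into $\Omega_s=u_s^{-1}((1,\infty))$, which has measure at most $1$ because $\|u_s\|_s=1$, so that $u_s^{2\beta}\le u_s^s$ there, and the complement, where $u_s^{2\beta}\le u_s^2$; in effect one interpolates between the uniform $L^2$ and $L^s$ bounds. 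You should also fix the sign bookkeeping in your inequality (\ref{moseriter}): having discarded the favourable $\int S_g u_s^2 u_{s,L}^{2(\beta-1)}\,dV_g$ term (which sits on the left with a positive sign since $S_g>0$), it should not then reappear as an additive contribution on the right.
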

 
 \begin{proof}
 



 
 Consider the Yamabe subcritical equation (\ref{two}). Let $\delta>0$ and multiply (\ref{two}) by $u_s^{1+2\delta}$. Then, integrating over $N$, we have 
 \begin{equation}
 \label{a1}
 a\int_{N} u_s^{1+2\delta} \Delta u_s dV_g +\int_{N}  S_g u_s^{2+2\delta}  dV_g = \int_{N} \lambda_s u_s^{s+2\delta} dV_g.
 \end{equation}
 Next, by setting $w=u_s^{1+\delta}$, we get $dw=(1+\delta)u_s^\delta du_s$. And so, multiplying  both sides of (\ref{a1}), by $(1+\delta)^2$, it simplifies to
 \[\frac{1+2\delta}{(1+2\delta)^2} a \int_N |dw|^2 dV_g + = \lambda_s \int_N u_s^{s-2}w^2 dV_g - \int_N S_g w^2dV_g.\]

  \noindent Then by using  the ``integration by parts'' formula,
\[ \int_{N} \left\langle \nabla \varphi, \nabla \psi \right\rangle dV_g= \int_{N} \varphi \Delta \psi dV_g,\]
 
 \noindent (cf. in \cite{Lee}, page 42), we have
 
 \begin{equation}
 \label{b1}
 \int_{N}|dw|^2 dV_g \leq \frac{(1+\delta)^2}{1+2\delta}  \frac{\lambda_s}{a} \int_N u_s^{s-2}w^2 dV_g.
  \end{equation}

 Now, since $(N,g)$ is a complete manifold, it has bounded sectional curvature, and strictly positive injective radius, then the Sobolev Embedding Theorem holds (cf. in \cite{Au1}), that is, for any $\epsilon>0$,  there is some $C_{\epsilon}$ such that
 
 \[||w||_p^2 \leq (1+\epsilon) \frac{a}{Y_{m+n}} \int_N |dw|^2 dV_g+ C_{\epsilon} \int_N w^2dV_g,\]
 \noindent hence, by equation (\ref{b1}),
 
  \[||w||_p^2 \leq (1+\epsilon) \frac{(1+\delta)^2}{1+2\delta}  \frac{\lambda_s}{Y_{m+n}} \int_N u_s^{s-2}w^2 dV_g + C'_{\epsilon} \int_N w^2dV_g,\] 
  \noindent and so, by H\"{o}lder's  inequality,
 
  \begin{equation}
  \label{jackson}
  ||w||_p^2 \leq (1+\epsilon) \frac{(1+\delta)^2}{1+2\delta}  \frac{\lambda_s}{Y_{m+n}} ||u_s||_{(s-2)(m+n)/2}^{s-2}||w||_p^2  + C'_{\epsilon} ||w||_2^2.
 \end{equation}

 Now we recall that by remark \ref{AFP}, there is some $\delta_1>0$ such that 
 
  \[\frac{\lambda_s}{Y_{m+n}}<1,\]  
 \noindent for all $s$, $p-\delta_1 \leq s \leq p$.


On the other hand, we note that  if $p-\delta \leq s\leq p$, then   
\[0\leq s- \left((s-2)\frac{n+m}{2}\right) \leq \delta \left(\frac{n+m}{2}\right).\] 

\noindent Meanwhile, by continuity of the norm, given $\epsilon>0$, there is some $\delta_{\epsilon}>0$ such that

\[  ||u_s||_{s'}\leq ||u_s||_s + \epsilon,\]

\noindent if $|s-s'|\leq\delta_{\epsilon}$. Then, by taking $\delta_2=\delta_{\epsilon}(\frac{2}{n+m})$, we have that for $s \in (p-\delta_2,p)$,

  \begin{equation}
  \label{s-2}
  ||u_s||_{(s-2)(n+m)/2}\leq ||u_s||_s + \epsilon=1+\epsilon,
  \end{equation}
  
  \noindent since $0\leq s-\left((s-2)(n+m)/2\right) \leq \delta_{\epsilon}$.
 
 Thus, in (\ref{jackson}), we can choose $\delta$ and $\epsilon$ small enough so that the coefficient of the first term  is less than $1$ and hence, can be absorbed by the left-hand side. We note that we need $s$ to be close enough to $p$ so that both (\ref{s-2}) and (\ref{asterisco}) are satisfied.

 We then have from (\ref{jackson})
  \begin{equation}
  \label{dobleu}
 ||w||_p^2\leq C||w||_2^2. 
 \end{equation}
 
 Hence, to finish the proof, we  only need to show that 
 \[||w||_2^2=||u_s||_{2(1+\delta)}^{1+\delta},\]

\noindent is bounded independently of $s$. We proceed as follows. First, we divide the support of $u_s$ in $\Omega_s=u_s^{-1}((1,\infty))$ and $\Omega_s^c$. Then we note that since $||u_s||_s=1$, then $Vol(\Omega_s)\leq1$, independently of $s$, and hence,  by H\"{o}lder's inequality
 
 \begin{equation}
  \label{amayuscula}
  \left(\int_{\Omega_s}u_s^{2(1+\delta)}\right)^{\frac{1+\delta}{2(1+\delta)}}\leq||u_s||_{2(1+\delta)}^{1+\delta}<||u_s||_s^{1+\delta}=1.
  \end{equation}
 
 Meanwhile, outside $\Omega_s$, since $u_s<1$, then
 
 \[u_s^{2(1+\delta)}<u_s^2,\]
 \noindent and then
  \begin{equation}
  \label{bmayuscula}
  \int_{\Omega_s^c}u_s^{2(1+\delta)}<\int_{N}u_s^2<C_1,
  \end{equation}
 \noindent where $C_1$ is independent of $s$, by (\ref{firstbound}). It follows from (\ref{amayuscula}) and (\ref{bmayuscula}) that $||u_s||_{2(1+\delta)}^{1+\delta}$ is bounded uniformly. And then, from (\ref{dobleu})
 \[||w||_p=||u_s||_{p(1+\delta)}^{1+\delta}\] 
 \noindent is bounded independently of $s$.

  \end{proof}
 
 
 It follows from this $L^r$ bound that we may find a $C^{2,\alpha}$ bound for the family $\{u_s\}$ on each compact subset of $N$.
 
 \begin{lemma}
 \label{alphabound}
 For the family of solutions $\{u_s\}$ in lemma \ref{4.4}, that are bounded uniformly in $L^r(N)$, there is a $C^{2,\alpha}$ bound on each compact $M\times B_R\subset N$.
 \end{lemma}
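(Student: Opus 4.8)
The plan is to run the classical elliptic bootstrap of the compact Yamabe problem (cf. \cite{Lee}) on each fixed slab $M\times B_R$, taking care that every elliptic and Sobolev constant that appears depends only on the fixed geometry of $(N,g)$ and on the uniform $L^r$-bound of Proposition \ref{4.4}, and never on $s$. Since $(N,g)=(M^m\times\mathbf{R}^n,g_M+g_E)$ is the Riemannian product of a closed manifold with Euclidean space, it has bounded geometry: once $R$ is fixed, the slab $M\times B_{R+2}$ is covered by finitely many coordinate charts in which the coefficients of $g$, of the Laplacian, and the function $S_g$ are controlled in $C^\infty$, with bounds independent of $s$. Fix $R$ together with a nested family of radii $R<R_1<R_2<\dots$ inside $R+2$ once and for all; all the estimates below are interior with respect to the $\mathbf{R}^n$-directions, and there is no boundary along $M$.

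First I would write the subcritical Yamabe equation (\ref{two}) in the form $a\,\Delta u_s=f_s$ with $f_s=\lambda_s u_s^{\,s-1}-S_g u_s$, noting that $\lambda_s$ is bounded (Remark \ref{AFP}) and $S_g$ is bounded, so that $f_s$ is controlled pointwise by a constant times $u_s^{\,s-1}$. By Proposition \ref{4.4}, $u_s$ is bounded in $L^r(N)$ for a fixed $r>p$ and all $s>s_0$; since $s-1<p-1$, this makes $f_s$ bounded in $L^{r/(s-1)}(N)$, uniformly in $s$. I would then iterate the standard two-step move: the interior $L^q$-estimate for $\Delta$ (Calder\'on--Zygmund, localized by a cutoff supported in $M\times B_{R_{i+1}}$ and equal to $1$ on $M\times B_{R_i}$) bounds $u_s$ in $L_2^q(M\times B_{R_i})$ by $\|f_s\|_{L^q(M\times B_{R_{i+1}})}+\|u_s\|_{L^q(M\times B_{R_{i+1}})}$, and the Sobolev embedding $L_2^q\hookrightarrow L^{q'}$, $\tfrac1{q'}=\tfrac1q-\tfrac2{n+m}$, then raises the integrability of $u_s$. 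If $x$ denotes the reciprocal of the current integrability exponent of $u_s$, one step of the bootstrap sends $x$ to $(s-1)x-\tfrac2{n+m}$, an affine map of slope $s-1>1$ whose fixed point corresponds to integrability $\tfrac{(s-2)(n+m)}{2}$; since $s<p$ gives $s-2<p-2=\tfrac4{n+m-2}$, this threshold is strictly less than $p$, hence strictly less than $r$. Therefore, starting from integrability $r$ we remain below the fixed point, the integrability of $u_s$ increases strictly at every step, and it surpasses any prescribed bound after a number of steps that stays uniform for $s$ near $p$ --- which is exactly why $r>p$ was arranged in Proposition \ref{4.4}. After finitely many steps $f_s$ is bounded in $L^q(M\times B_{R'})$ for some $q>n+m$ and some $R'>R$, whence $u_s$ is bounded in $L_2^q(M\times B_{R'})\hookrightarrow C^{1,\alpha}(M\times B_{R'})$, all bounds uniform in $s$.

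Finally I would upgrade the $C^{1,\alpha}$-bound to a $C^{2,\alpha}$-bound. With a uniform $C^{1,\alpha}$-bound (in particular a uniform $L^\infty$-bound) for $u_s$, and using $s>2$ so that $t\mapsto t^{\,s-1}$ is locally Lipschitz on $[0,\infty)$, the function $u_s^{\,s-1}$, and hence $f_s=\lambda_s u_s^{\,s-1}-S_g u_s$, is bounded in $C^{0,\alpha}(M\times B_{R''})$ for a slightly smaller radius $R''$, uniformly in $s$. The interior Schauder estimate for $\Delta$ then gives the required uniform $C^{2,\alpha}$-bound for $\{u_s\}$ on $M\times B_R$. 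I do not expect any essential difficulty here beyond bookkeeping: the only point that has to be watched is precisely the uniformity in $s$, and it holds because all the elliptic and Sobolev constants, and the number of bootstrap steps, depend only on $n$, $m$, $r$, $p$ and the fixed local geometry of $M\times B_{R+2}$, so the $s$-uniform $L^r$-bound of Proposition \ref{4.4} propagates to an $s$-uniform $C^{2,\alpha}$-bound.
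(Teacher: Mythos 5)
Your proposal is correct and follows essentially the same route as the paper: both write the subcritical equation as $a\Delta u_s = f_s$ with $f_s$ controlled in $L^{r/(s-1)}$ by the uniform $L^r$-bound of Proposition \ref{4.4}, and both run the Lee--Parker elliptic bootstrap on nested slabs $M\times B_{R_i}$ (Calder\'on--Zygmund/$L^q$-elliptic estimates alternating with Sobolev embedding) until the exponent exceeds $n+m$, followed by a final Schauder step to reach $C^{2,\alpha}$. Your reciprocal-exponent bookkeeping makes the direction of the iteration and the uniformity of the number of steps in $s$ more transparent than the paper's terse treatment (which, incidentally, contains a sign slip, writing $r>r'$ where the displayed inequality actually shows $r'>r$), but this is a presentational refinement of the same argument, not a different approach.
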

 
 \begin{proof}
 Consider any compact subset $M\times B_R\subset N$, and take $R_0, R_1, R_2$, ($R<R_0<R_1<R_2$) large enough. Of course,  for any $r>0$, $Y(M\times B_r,g_M+g_E)\leq Y(M\times \mathbf{R}^n,g_M+g_E)<Y_{m+n}$.
Now, since $u_s\in L^r(N)$ (lemma \ref{4.4}), then by (\ref{subway}), 
\[|\Delta u_s|=|\lambda_s u_s^{s-1}- \frac{S_g}{a} u_s| \in L^q(M\times B_{R_2}),\] 
\noindent with $q=\frac{r}{s-1}$. Then, by standard elliptic regularity theory (for example, Gilbarg and Trudinger, \cite{Gil}), we have $u_s \in L_2^q(M\times B_{R_1})$. And then, from the Sobolev Embedding Theorem, $u_s \in L^{r'}(M\times B_{R_1})$, with $r'=\frac{(n+m)r}{(n+m)s-(n+m)-2r}$. Of course, $r>r'$, since $r>p=\frac{(n+m)(p-2)}{2}>\frac{(n+m)(s-2)}{2}$. By iterating this procedure we get $u_s\in L_2^q$ for all $q>1$.
 
 Then, again by the Sobolev Embedding Theorem, we have $u_s \in C^{\alpha}(M\times B_R)$ for some $\alpha>0$. Thus, using standard elliptic regularity theory one more time, we conclude that $u_s\in C^{2,\alpha}(M\times B_R)$.

 This implies that we have a uniform $C^{2,\alpha}$ bound on each compact subset $M\times B_R\subset N$.
 
  \end{proof}

 It follows now from the  the Arzela-Ascoli Theorem that we can find a subsequence  $\{u_{s_k}\}\subset\{u_s\}$ which converges to its limit $u$ on each compact subset of $(N,g)$.  From this, we can construct the limit function $u$ such that $u_{s_k}$ converges to $u$ on all of $N$. Then, using lemma \ref{novnod} we will prove that $\lim_{k\rightarrow \infty} ||u_{s_k}||_p=1$. Naturally, the limit function $u$ would be a solution to the Yamabe equation, completing thus the proof of Theorem \ref{principal}.

\begin{lemma}
\label{finaldestination}
Let $\{u_s\}$ be the sequence of functions given by lemma \ref{subcritical}, then, as $s\rightarrow p$ there is a subsequence   $\{u_{s_k}\}\subset\{u_s\}$ such that it converges to a positive, $C^{\infty}$ solution, of

\[a\Delta u+S_g u = \lambda u^{p-1},\]
\noindent with
\[||u||_p=1\]
\noindent and
\[Q_p(u)=Y(N,[g])=\lambda.\]

\end{lemma}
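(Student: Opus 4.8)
The plan is to run the classical compact-case argument (as in \cite{Lee}) while controlling the behaviour at infinity with the concentration estimates already established. I would fix a sequence $s_k\uparrow p$ with $s_k\in(2,p)$, so that each $u_{s_k}$ is the solution furnished by Lemma \ref{subcritical}, and apply Lemma \ref{novnod} to $\{u_{s_k}\}$ with $b_k=s_k$; its hypotheses hold since $u_{s_k}=u_{s_k}^{*}$ by construction, $\int_N u_{s_k}^{s_k}\,dV_g=1$, and $\{u_{s_k}\}$ is uniformly bounded in $L_1^2(N)$ by (\ref{firstbound}). After passing to a subsequence this yields, for every $\beta\in(2,p)$ and every $\epsilon>0$, a radius $R_\epsilon$ with $\int_{N\setminus(M\times B_{R_\epsilon})}u_{s_k}^{\beta}\,dV_g\le\epsilon$ for all large $k$. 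Next, using the uniform $C^{2,\alpha}$-bound on each $M\times B_R$ from Lemma \ref{alphabound}, Arzela--Ascoli and a diagonal argument over $R=1,2,\dots$ would produce a further subsequence converging in $C^2_{\mathrm{loc}}(N)$ to some $u\ge0$; extracting once more so that $\lambda_{s_k}\to\lambda$ (legitimate since $0<\lambda_{s_k}\le\lambda_p+1$ for large $k$ by Lemma \ref{4.3}) and letting $k\to\infty$ in (\ref{subway}) gives that $u$ solves $a\Delta u+S_g u=\lambda u^{p-1}$ on $N$; moreover $u\in L_1^2(N)$ as the weak $L_1^2$-limit of $\{u_{s_k}\}$.

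The heart of the matter is to show $\|u\|_p=1$, i.e.\ that no $L^p$-mass escapes to infinity. Here I would combine the tail bound above with the uniform estimate $\|u_{s_k}\|_r\le C$ for some $r>p$ from Proposition \ref{4.4}: choosing $\beta\in(2,p)$ and $\theta\in(0,1)$ with $\tfrac1p=\tfrac\theta\beta+\tfrac{1-\theta}r$, H\"older's inequality on $\Omega_R:=N\setminus(M\times B_R)$ gives
\[\int_{\Omega_R}u_{s_k}^p\,dV_g\le\Big(\int_{\Omega_R}u_{s_k}^{\beta}\,dV_g\Big)^{\theta p/\beta}\Big(\int_{\Omega_R}u_{s_k}^{r}\,dV_g\Big)^{(1-\theta)p/r}\le\epsilon^{\theta p/\beta}\,C^{(1-\theta)p},\]
so the $L^p$-tails are uniformly small; the same estimate controls the $L^{s_k}$-tails, because on $\{u_{s_k}>1\}$ one has $u_{s_k}^{s_k}\le u_{s_k}^p$ and on $\{u_{s_k}\le1\}$, for $k$ large, $u_{s_k}^{s_k}\le u_{s_k}^{\beta}$. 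On the fixed compact $M\times B_R$ the convergences $u_{s_k}\to u$ (uniform) and $s_k\to p$ give $\int_{M\times B_R}u_{s_k}^{s_k}\,dV_g\to\int_{M\times B_R}u^p\,dV_g$, and since $\int_N u_{s_k}^{s_k}\,dV_g=1$ with uniformly small tails, letting $R\to\infty$ yields $\int_N u^p\,dV_g=1$.

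Once $\|u\|_p=1$ is in hand the rest is routine. Since $u\in L_1^2(N)\cap L^p(N)$, the weak formulation of $a\Delta u+S_g u=\lambda u^{p-1}$ extends by density to the test function $u$, giving $\int_N(a|\nabla u|^2+S_g u^2)\,dV_g=\lambda$ and hence $Q_p(u)=\lambda$. By density of $C_0^\infty(N)$ and the definition (\ref{b}) one has $\lambda=Q_p(u)\ge\lambda_p=Y(N,[g])$, while $\lambda=\lim_k\lambda_{s_k}\le\lambda_p$ by Lemma \ref{4.3}; therefore $\lambda=Y(N,[g])$ and $u$ realizes the Yamabe quotient. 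Finally $u\ge0$ satisfies the linear elliptic equation $a\Delta u=(\lambda u^{p-2}-S_g)u$ with coefficient locally in $L^\infty$, so the Harnack inequality (equivalently the strong maximum principle, cf.\ \cite{Lee}) together with $u\not\equiv0$ forces $u>0$ everywhere; bootstrapping elliptic regularity in $a\Delta u=\lambda u^{p-1}-S_g u$ starting from $u\in C^{2,\alpha}_{\mathrm{loc}}$ then gives $u\in C^\infty(N)$.

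The step I expect to be the main obstacle is the second paragraph --- excluding loss of $L^p$-mass at infinity. This is exactly where the Steiner symmetrization (which makes each $u_{s_k}$ radially decreasing in the $\mathbf{R}^n$-factor and so legitimizes Lemmas \ref{concentration}--\ref{novnod}) and the super-critical bound $\|u_{s_k}\|_r\le C$ of Proposition \ref{4.4} are both indispensable, the latter being needed to upgrade the sub-critical tail control (valid only for exponents $\beta<p$) to the critical exponent $p$ despite $s_k$ varying with $k$.
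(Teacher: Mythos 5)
Your proposal is correct and proves the lemma, but it diverges from the paper's own argument at the two pivotal places, so a comparison is worthwhile. For the no-mass-escape step, the paper first extracts a \emph{uniform $L^\infty$ bound} on the $u_{s_k}$: since $u_{s_k}=u_{s_k}^*$ is radially decreasing in the $\mathbf{R}^n$-factor, $\sup_N u_{s_k}=\sup_{M\times\{0\}}u_{s_k}\le\sup_{K_1}u_{s_k}$, and the latter is controlled by the $C^2$-convergence on $K_1=M\times\bar B_1$; with $A_\infty$ in hand one upgrades the sub-critical tail bound pointwise via $u_{s_k}^{s_k}\le A_\infty^{s_k-\beta}u_{s_k}^{\beta}$. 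You instead interpolate between $L^\beta$ ($\beta<p$, tail from Lemma \ref{novnod}) and the super-critical $L^r$ bound of Proposition \ref{4.4} by H\"older, then split on $\{u_{s_k}\gtrless1\}$ to handle the $s_k$-dependent exponent. Both are sound; the paper's route is shorter once one notices the $L^\infty$ bound comes free from the Steiner symmetry, while yours makes explicit a different use of Proposition \ref{4.4} (which the paper uses only for the $C^{2,\alpha}$ bounds) and would survive in settings where the symmetrization-to-$L^\infty$ trick is unavailable. For $\lambda=\lambda_p$, the paper isolates Lemma \ref{lambdas}, using $Q_p(u_{s_k})=\lambda_{s_k}/\|u_{s_k}\|_p^2$ to get $\lambda_p\le\liminf\lambda_{s_k}$ and pairing it with Lemma \ref{4.3}; you argue directly that $\lambda=Q_p(u)\ge\lambda_p$ by density and $\lambda\le\lambda_p$ by Lemma \ref{4.3}, which is a bit cleaner though it presupposes $\|u\|_p=1$ and $u\in L^2_1(N)$ have already been secured (as they have). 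The rest — diagonal Arzela--Ascoli, passage to the limit in (\ref{subway}), Trudinger regularity and the strong maximum principle for positivity — matches the paper.
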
 
\begin{proof}

By lemma \ref{alphabound} we have that the sequence $\{u_s\}$  is $C^{2,\alpha}$ uniformly bounded on each compact $M\times B_R \subset N$. Then, by the Arzela-Ascoli theorem (cf. in \cite{Peter}), this implies that for each compact $K_R=M\times B_R \subset N$, there is a subsequence $\{u_{s_k}\} \subset \{u_s\}$ such that it converges in $C^2(K_R)$ norm to a function in $C^2(K_R)$ that we will denote by $u|_K$.  Then, since $K_R\subset K_{R'}$ for $R<R'$, we have uniqueness of limits on each compact (because of the $C^2(K_R)$ convergence for each R). Also, since $N=\bigcup_i^{\infty} K_i$, then we have our limit function $u$ as a well defined function on all of $N$ by taking $u=lim_{R\rightarrow \infty} u|_{K_R}$.


We now  prove that $\lim_{k\rightarrow \infty} ||u_{s_k}||_{s_k}=||u||_p=1$. We use lemma  \ref{novnod}. First, we note that the hypothesis are satisfied by $\{u_{s_k}\}$. We already know that $u_{s_k}=u_{s_k}^*$ and that $||u_{s_k}||_{s_k}=1$, for each $k>1$. On the other hand, equation (\ref{firstbound}) shows that the $u_{s_k}$ are uniformly bounded in $L_1^2(N)$. To prove that the $u_{s_k}$ are uniformly bounded in $L^{\infty}(N)$, consider the compact set $K_1=(M\times \bar{B_1})$. We recall that $u_{s_k}\rightarrow u|_{K_1}$ on $K_1$, in $C^2$ norm. Hence, for all $k>k_1$, $k_1$ large enough,
\[\sup_{K_1} u_{s_k}\leq (\sup_{K_1} u|_{K_1} )+1,\]

\noindent Then, since $u_{s_k}=u_{s_k}^*$ for all $k>1$, we know that 
\[\sup_N u_{s_k}\leq \sup_{K_1} u_{s_k}.\]
\noindent Of course this implies that $(\sup_{K_1} u_{s_k})\leq (\sup_{K_1} u|_{K_1})+1$, and then the $u_{s_k}$ are uniformly bounded in $L^{\infty}(N)$ for all $k>k_1$.


Now, let $\beta \in (2,p)$. Let $\epsilon>0$, then, by lemma \ref{novnod}, there is some $R_{\epsilon}>0$ and some $k_2>1$ such that
\begin{equation}
\label{pre}
\int_{N \setminus (M\times B_{R_{\epsilon}})} u_{s_k}^{\beta} dV_g <\epsilon
\end{equation}
\noindent for all $k>k_2$.

On the other hand, since $u_k$ is bounded uniformly in $L^{\infty}(N)$, say  $u_k\leq A_{\infty}$ (for all $k>k_3$, for some $k_3>1$) we have

\[\int_{N\setminus (M\times B_{R_{\epsilon}})} u_{s_k}^{s_k} dV_g\leq A_{\infty}^{s_k-\beta}\int_{N\setminus (M\times B_{R_{\epsilon}})} u_{s_k}^{\beta} dV_g\] 
 \begin{equation}
\label{okmaguey}
\leq C_A \int_{N\setminus (M\times B_{R_{\epsilon}})} u_{s_k}^{\beta} dV_g \leq C_A \epsilon
\end{equation}
\noindent where $C_A$ is a constant such that $C_A= \max \{1,A_{\infty}\}$ (and of course, we have chosen $k_4$ large enough so that $s_k-\beta>0$, for all $k>k_4$). The last inequality  of (\ref{okmaguey}) is an application of (\ref{pre}). It follows from (\ref{okmaguey}) that

\[\lim_{k\rightarrow \infty} ||u_{s_k}||_{s_k} =\alpha = 1.\] 


\noindent Hence $||u||_p=1$. Of course, this implies that there is a subsequence $\{u_{s_k}\}\subset\{u_s\}$ such that it converges in $C^2$ norm to a solution $u \in C^2(N)$ that satisfies
\[a\Delta u+S_g u = \lambda u^{p-1},\]
\noindent with
\[Q_p(u)=\lambda,\]
\noindent where $\lambda=\lim_{s\rightarrow p} \lambda_s$. The  following continuity lemma  implies that $\lambda=\lambda_p=Y(N,[g])$. 

\begin{lemma}
 \label{lambdas}
Consider the set $\{\lambda_s\}$ as defined by equation \ref{b}, then $\lambda_s\rightarrow \lambda_p$ as $s\rightarrow p$. 
 \end{lemma}
 \begin{proof}
 Since $\lim_{k\rightarrow \infty} ||u_k||_p=1$, recalling that $||u_k||_{s_k}=1$ and $Q_{s_k}(u_k)=\lambda_{s_k}$, we have by (\ref{equal}), 
 \[Q_p(u_k)= \frac{\lambda_{s_k}}{||u_k||_p}.\]
 \noindent Then, for $s_k$ close enough to $p$,
\[\lambda_p\leq Q_p(u_k)=\frac{\lambda_{s_k}}{||u_k||_p}\leq \lambda_{s_k}(1+\epsilon)\leq \lambda_{s_k}+\epsilon Y_{n+m}, \]
\noindent since $\lambda_{s_k}< Y_{n+m}$, for all $s_k\leq p$, by (\ref{asterisco}).

We conclude, using lemma \ref{4.3}, that $\lambda_s\rightarrow \lambda_p$ as $s\rightarrow p$. 
 \end{proof} 

Finallly, the regularity of $u$ follows from a result of N. Trudinger (Theorem 3 in \cite{Tru}), since $u$ is an $L_1^2(N,g)$ solution of the Yamabe equation. 

On the other hand, since $S_g>0$ and $u$ is smooth, it follows from the maximum principle (cf. in \cite{Lee}) that $u$ is positive.

\end{proof}

Of course, from lemmma \ref{finaldestination}, Theorem \ref{principal} follows.

\begin{tabular}{lll}
CIMAT, Jalisco S/N, Col. Valenciana, CP 36240 Guanajuato\\
Guanajuato Mexico\\
\textit{E-mail:} miguel@cimat.mx
\end{tabular}

\end{document}